\documentclass[11pt,reqno,oneside]{amsart}

\usepackage{graphicx}
\usepackage{amssymb}
\usepackage{epstopdf}

\usepackage[a4paper, total={6in, 9.1in}]{geometry}

\usepackage{amsmath,amsfonts,amsthm,mathrsfs,amssymb,cite}
\usepackage[usenames]{color}

\usepackage{subfigure} 
\usepackage{framed}

\usepackage{enumerate}
\usepackage{hyperref}
\usepackage{xcolor}
\hypersetup{
  colorlinks,
  linkcolor={blue!80!black},
  urlcolor={blue!80!black},
  citecolor={blue!80!black}
}
\usepackage[capitalize,noabbrev]{cleveref}

\usepackage{booktabs} 
\usepackage{array} 
\usepackage{paralist} 
\usepackage{verbatim} 
\usepackage{tabularx}
\usepackage{amsmath,amsfonts,amsthm,mathrsfs,amssymb,cite}
\usepackage[usenames]{color}
\usepackage{bm}
\usepackage[T1]{fontenc}

\newtheorem{thm}{Theorem}[section]
\newtheorem{cor}[thm]{Corollary}
\newtheorem{lem}{Lemma}[section]
\newtheorem{prop}{Proposition}[section]
\theoremstyle{definition}

\theoremstyle{remark}

\newtheorem{rem}{Remark}[section]
\numberwithin{equation}{section}

\allowdisplaybreaks

\title[stability estimates for Master equation]{Lipschitz Stability Estimates for Master Fields with Measure-Variable Regularity in Mean Field Games}

\author{Chen Geng}
\address{School of Mathematics, Harbin Institute of Technology, Harbin, China.
Department of Mathematics, City University of Hong Kong, Hong Kong,
China}
\email{gengchen@stu.hit.edu.cn, chengeng3-c@my.cityu.edu.hk}

\author{Hongyu Liu}
\address{Department of Mathematics, City University of Hong Kong, Hong Kong,
	China}
\email{hongyliu@cityu.edu.hk, hongyu.liuip@gmail.com}

\author{Minghui Song}
\address{School of Mathematics, Harbin Institute of Technology, Harbin, China.}
\email{songmh@hit.edu.cn}

\address{}
\email{}

\begin{document}
\maketitle

\begin{abstract}
	
In this paper, we investigate the Lipschitz stability of the master field and its functional derivative with respect to the measure variable for the master equation in mean field games. A key novelty of this work is the derivation of a nonlocal parabolic partial differential equation governing the functional measure derivative along a characteristic measure flow. Based on this equation, we establish a Carleman estimate for the difference of the measure derivatives associated with two sufficiently regular classical solutions. We also establish a Carleman estimate for the difference of the corresponding master fields, yielding a terminal-to-interior Lipschitz stability estimate along a characteristic measure flow. Together with stability of the associated measure flows, these estimates are used to control the resulting source terms. We then obtain a terminal-to-interior Lipschitz stability estimate for both the master field and its functional measure derivative in terms of the discrepancies of the terminal data. The result provides quantitative control of the master field with measure-variable regularity, and develops a Carleman-based framework for the stability analysis of master fields and their measure derivatives in mean field game master equations.

	\medskip

	\noindent{\bf Keywords}: mean field games, master equation, measure derivative, Carleman estimate

\end{abstract}

\section{Introduction}
\subsection{Background and motivation.}

Mean field games provide a mathematical framework for describing the strategic
interactions of a large population of rational agents. This theory was introduced
independently by Lasry and Lions \cite{LasryLions2006a,LasryLions2006b,LasryLions2007}
and by Huang, Caines and Malham\'e
\cite{HuangMalhameCaines2006,HuangCainesMalhame2007}.
It provides an asymptotic description of Nash equilibria in stochastic differential games with a large number of weakly interacting agents, where the interaction among individual players is mediated through the distribution of the population.
In the classical PDE formulation, the equilibrium is characterized by a coupled forward--backward system:
a Hamilton--Jacobi equation for the value function of a representative agent and a
Fokker--Planck equation for the evolution of the population distribution. The coupling reflects the equilibrium mechanism: each agent optimizes according to the population distribution,
and the population evolves under the optimal feedback generated by the value function.

The master equation is a fundamental object in mean field game theory. Unlike the
mean field game system associated with a fixed initial distribution, the master equation describes
the dependence of the value function on the population distribution. In this paper, we consider the solution of the master equation
\[
U=U(t,x,m), \qquad (t,x,m)\in [0,T]\times \mathbb T^d\times \mathcal P(\mathbb T^d),
\]
where \(\mathcal P(\mathbb T^d)\) denotes the space of probability measures on the
torus $\mathbb{T}^d$. 
By treating the population distribution as an additional variable, the master equation provides a unified description of the family of MFG systems associated with different initial distributions. 
It also plays a central role in the rigorous analysis of the mean field limit of \(N\)-player Nash systems, since the limiting value function is naturally expected to depend on both the state of a representative player and the empirical distribution of the remaining players \cite{CardaliaguetDelarueLasryLions2019}. 
Owing to its dependence on the measure variable and the resulting nonlocal interactions, the master equation is an infinite-dimensional, nonlinear and nonlocal PDE on the space of probability measures; see also
\cite{BensoussanFrehseYam2013,CarmonaDelarue2018,CardaliaguetCirantPorretta2023}
for related developments.

The dependence of $U$ on the probability measure $m$ requires a differential calculus
on the space of probability measures. In the analysis of the master equation, two related
notions of measure derivatives are commonly used. The functional derivative
$\delta U/\delta m$ describes the first-order variation of the master field with respect to
perturbations of the population distribution, while the intrinsic derivative $D_mU$,
also referred to as the Lions derivative, describes the corresponding variation under
infinitesimal transport of the population mass. These derivatives play a fundamental role
in the formulation and regularity theory of master equations and in the analysis of their dynamics. In particular, the functional measure derivative provides a natural
description of the first-order sensitivity of the master field with respect to the population
distribution. We refer to
\cite{AmbrosioGigliSavare2008,Villani2009,LionsCollege,
	CardaliaguetNotes2013,CardaliaguetDelarueLasryLions2019,
	CardaliaguetCirantPorretta2023}
for the calculus on spaces of probability measures and its applications to master equations.

The functional derivative with respect to the measure variable also carries a clear
modeling significance, since it quantifies the sensitivity of an individual value to
variations in the distribution of the population. A representative example is provided
by mean field models of traffic congestion, where the population measure describes the
distribution of vehicles over a transportation network and the value function represents
the optimal travel cost of an individual driver; see, for instance,
\cite{HuangChenDiDu2021}. In the corresponding master-equation formulation,
$\delta U/\delta m(t,x,m,y)$ describes how a perturbation of the traffic distribution
around the location $y$ influences the value of a driver located at $x$.
Consequently, knowledge of the measure derivative over the whole time interval provides
information on how congestion effects are transmitted through the transportation network.
For example, it may help identify regions whose traffic perturbations have a significant
influence on other parts of the network, as well as regions in which drivers are
particularly sensitive to congestion occurring elsewhere. Such sensitivity information
is relevant to the analysis of traffic bottlenecks, route planning, and the design and
optimization of transportation networks. Similar interpretations arise in other
large-population models. In systemic-risk models, the population distribution describes
the financial states of a large number of interacting banks
\cite{CarmonaFouqueSun2015}, while in dynamic energy-demand models it represents the
distribution of consumption states of a large population of users
\cite{Bauso2017}. In these applications, the measure derivative characterizes how
changes in the collective configuration of the system affect the value of an individual
agent. 
Therefore, obtaining the master field and, in particular, its functional measure derivative throughout the time interval from terminal-time information, together with corresponding stability estimates, provides meaningful information on the evolution of the value function and its population sensitivity and constitutes a natural problem in the analysis of master equations.

The theory of master equations has developed substantially in recent years, with
particular emphasis on well-posedness and regularity, especially with respect to the
measure variable. Under suitable monotonicity and regularity assumptions, Cardaliaguet,
Delarue, Lasry and Lions \cite{CardaliaguetDelarueLasryLions2019} established a systematic
classical theory for the master equation and its connection with finite-player Nash systems.
Probabilistic approaches to classical solutions were further developed in
\cite{CarmonaDelarue2018,ChassagneuxCrisanDelarue2022}.
Beyond the global monotone framework, short-time solvability of first-order master
equations was established by Gangbo and \'{S}wi\k{e}ch
\cite{GangboSwiech2015} and later further developed in
\cite{Mayorga2020}, while Cardaliaguet, Cirant and Porretta
\cite{CardaliaguetCirantPorretta2023} developed a splitting approach and obtained
systematic estimates for spatial and measure derivatives of the master field.
Related extensions include master equations in bounded domains
\cite{Ricciardi2022}.
Alternative global-in-time theories based on displacement convexity and displacement
monotonicity, together with generalized notions of monotone solutions for master equations, were developed in \cite{GangboMeszaros2022,GangboMeszarosMouZhang2022,
	CardaliaguetSouganidis2022,Bertucci2023}.
More recently, Mou and Zhang
\cite{MouZhang2024,MouZhang2025} developed well-posedness theories under weaker
regularity and alternative structural conditions, while Bertucci, Lasry and Lions
\cite{BertucciLasryLions2024} established a theory of Lipschitz solutions beyond the
classical monotonicity framework.
Thus, although several alternative frameworks have been developed, the classical well-posedness theory of master equations is still largely based either on monotonicity-type structural conditions for global-in-time results or on local-in-time arguments under a short-time restriction.
Quantitative stability estimates for master fields with respect to terminal data remain relatively limited. 
In particular, Mou, Zhang and Zhou \cite{MouZhangZhou2025} established a Lipschitz-type stability estimate for classical master fields under perturbations of the Hamiltonian and terminal data through probabilistic approach.
While, a Carleman framework has not
previously been developed for the quantitative stability analysis of the master
equation itself. Complementing these approaches, we establish a
terminal-to-interior Lipschitz stability estimate for the master field along
characteristic measure flows. We further extend this framework to the functional
measure derivative, leading to quantitative stability at the level of the
first-order dependence of the master field on the population distribution.

A closely related line of research concerns quantitative determination, stability,
and inverse analysis for mean field game systems. In this direction, Liu and
collaborators have developed a series of results on the recovery of running and
terminal costs, Hamiltonians, states, stationary states, and state-space anomalies
from various types of observations
\cite{LiuMouZhang2023,LiuZhang2025,LiuYamamoto2023,
	LiuLo2025,LiuLoZhang2025,DingLiuZheng2025,
	LiuLoStationary2025}.
Of particular relevance to the present work, Carleman estimates have been
	successfully applied to quantitative stability and uniqueness for MFG systems,
	yielding H\"older and Lipschitz stability, unique continuation, state determination,
	inverse-source results, lateral Cauchy estimates, and stability under terminal or
	overdetermined observations
\cite{KlibanovAverboukh2024,KlibanovLiLiu2023,
	ImanuvilovLiuYamamoto2023,ImanuvilovLiuYamamoto2024,
	KlibanovLiLiu2024,KlibanovLiLiu2026}.
More recently, inverse analysis has begun to move beyond finite-dimensional MFG
systems toward PDEs posed directly on spaces of probability measures. In particular,
Liu, Qian and Zhang \cite{LiuQianZhang2025} developed an inverse framework for
infinite-dimensional transport equations on Wasserstein space, with the MFG master
equation as one of the principal examples.
These developments provide the methodological background for the present
analysis. We extend the Carleman approach from MFG systems to the master-equation
level and first obtain terminal-to-interior Lipschitz stability for the master field
along characteristic measure flows. We then derive a nonlocal parabolic equation for the functional measure derivative and establish its corresponding terminal-to-interior Lipschitz stability. 
To the best of our knowledge, quantitative stability of the
functional measure derivative with respect to perturbations of the terminal data has
not previously been established. Thus, our results provide a Carleman framework for
stability both at the level of the master field and at the level of its first-order
sensitivity with respect to the population distribution.

This work is concerned with the quantitative stability of the master field and its functional measure derivative. Building on the differentiability theory of master equations and the Carleman-based stability analysis developed for mean field game systems, we study the propagation of terminal information first at the level of the master field and then at the level of its first-order sensitivity with respect to the population distribution. A central ingredient of our analysis is the derivation of a nonlocal parabolic evolution equation satisfied by the functional measure derivative along a characteristic measure flow. This formulation makes it possible to apply Carleman techniques directly at the level of the measure derivative. By combining the resulting estimate with the stability estimate for the master field and the associated measure dynamics, we establish a terminal-to-interior Lipschitz stability result for both the master field and its functional measure derivative. In this way, the present work develops a Carleman-based framework for the stability analysis of the master equation, and further extends this framework to its measure-variable regularity.

\subsection{Problem statement and main results.}
Next, we formulate the master equation under consideration and state the
stability problem studied in this paper. Let $\mathbb{T}^d$ denote the
$d$-dimensional flat torus and let $\mathcal{P}(\mathbb{T}^d)$ be the space
of Borel probability measures on $\mathbb{T}^d$. We consider the following
mean field game master equation without common noise
\begin{equation}\label{MasterEquation}
\left\{
\begin{aligned}
&-\partial_t U(t,x,m)
-\Delta_x U(t,x,m)
+H\bigl(x,D_xU(t,x,m)\bigr)
\\
&\qquad
-\int_{\mathbb T^d}
\mathrm{div}_y\bigl(D_mU(t,x,m,y)\bigr)\,dm(y)
\\
&\qquad
+\int_{\mathbb T^d}
D_mU(t,x,m,y)\cdot
D_pH\bigl(y,D_xU(t,y,m)\bigr)\mathrm{d}m(y)
=F(x,m),\\
&\qquad
(x,t,m)\in \mathbb T^d\times(0,T)\times\mathcal P(\mathbb T^d),
\\
&U(T,x,m)=G(x,m), \qquad
(x,m)\in \mathbb T^d\times\mathcal P(\mathbb T^d),
\end{aligned}
\right.
\end{equation}
Here $U(t,x,m)$ denotes the value of a representative agent at time $t$
and state $x$ when the population distribution is $m$. The function
$H(x,p)$ is the Hamiltonian, while $F(x,m)$ and $G(x,m)$ denote the
running and terminal cost functionals, respectively. To specify the measure derivatives appearing in \eqref{MasterEquation}, we recall
the following notation. Let
$
\Phi:\mathcal{P}(\mathbb{T}^d)\to\mathbb{R}$, and we say that $\Phi$ is differentiable with respect to the measure variable if there exists a continuous function
\[
\frac{\delta\Phi}{\delta m}:
\mathcal{P}(\mathbb{T}^d)\times\mathbb{T}^d\to\mathbb{R}
\]
such that, for any $m,m'\in\mathcal{P}(\mathbb{T}^d)$,
\[
\lim_{s\to0^+}
\frac{\Phi((1-s)m+sm')-\Phi(m)}{s}
=
\int_{\mathbb{T}^d}
\frac{\delta\Phi}{\delta m}(m,y)\,d(m'-m)(y).
\]
The functional derivative is uniquely determined up to an additive constant. Throughout
this paper, we adopt the normalization
\[
\int_{\mathbb{T}^d}
\frac{\delta\Phi}{\delta m}(m,y)\,dm(y)=0.
\]
If the map
$
y\longmapsto\frac{\delta\Phi}{\delta m}(m,y)$
is differentiable, we define the intrinsic derivative, also called the Lions derivative by
\[
D_m\Phi(m,y)
:=
D_y\frac{\delta\Phi}{\delta m}(m,y).
\]
Applying these definitions to $\Phi(m)=U(t,x,m)$ gives
$\delta U/\delta m$ and $D_mU$ appearing throughout the paper.

Since the measure variable is infinite-dimensional, it is useful to
analyze the master equation along measure flows generated by the
associated mean field game system. For a given initial time
$t_0\in[0,T]$ and initial distribution
$m_0\in\mathcal{P}(\mathbb{T}^d)$, consider the following mean field game system
\begin{equation}\label{MFG}
\left\{
\begin{aligned}
&-\partial_t u(t,x)-\Delta u(t,x)
+H\bigl(x,D_xu(t,x)\bigr)
=F(x,m),
\quad (t,x)\in(t_0,T)\times\mathbb T^d,
\\
&\partial_t m(t,x)-\Delta m(t,x)
-\operatorname{div}\Bigl(
m(t,x)D_pH\bigl(x,D_xu(t,x)\bigr)
\Bigr)=0,
\quad (t,x)\in(t_0,T)\times\mathbb T^d,
\\
&u(T,x)=G(x,m(T)), \quad
m(t_0,x)=m_0(x),
\quad x\in\mathbb T^d.
\end{aligned}
\right.
\end{equation}
When the measure flow is absolutely continuous, we use the same notation
$m(t,x)$ for its density. For $M>0$, we define the class of admissible probability densities
\begin{equation*}\label{eq:bounded-density-class}
\mathcal P_M(\mathbb T^d)
:=
\left\{
m\in \mathcal P(\mathbb T^d)\cap L^\infty(\mathbb T^d):
\|m\|_{L^\infty(\mathbb T^d)}\le M
\right\}.
\end{equation*}
If $U$ is a sufficiently regular solution of \eqref{MasterEquation}
and $(u,m)$ is the corresponding solution of \eqref{MFG}, then
\begin{equation}
\label{eq:MFG-slice}
u(t,x)=U(t,x,m(t)),
\qquad t\in[t_0,T].
\end{equation}
Following the characteristic interpretation of the MFG system for the master equation,
we refer to the measure component $m$ of \eqref{MFG} as a
characteristic measure flow associated with $U$.
Thus, \eqref{eq:MFG-slice} describes the restriction of the master field along such a
characteristic measure flow. This relation, which is recalled rigorously in Section~2,
provides the link between the master equation and the finite-dimensional parabolic
systems used in our stability analysis.

We now formulate the stability problem considered in this paper.
Let $U_1$ and $U_2$ be two sufficiently regular classical solutions
of the master equation corresponding to the same Hamiltonian $H$
and running cost $F$, but to possibly different terminal cost
functionals $G_1$ and $G_2$, respectively. We set
$\overline{U}:=U_1-U_2$.
Let $M>0$ be fixed and let
$m_0\in\mathcal P_M(\mathbb T^d)$ be an arbitrary initial probability
density. Let $m_1(t)$ denote the characteristic measure flow associated
with $U_1$ and issued from $m_0$. The initial density $m_0$ is not fixed
throughout the analysis; rather, the argument applies to any
$m_0\in\mathcal P_M(\mathbb T^d)$ for which the corresponding
characteristic flows satisfy the assumptions imposed below.
Accordingly, $m_1$ should be regarded as an arbitrary reference
characteristic flow within this class.

The purpose of the present work is to quantify the propagation of
perturbations in the terminal data to the master field and, in
particular, to its functional derivative with respect to the measure
variable. More precisely, under the assumptions stated in the
subsequent sections, we establish a terminal-to-interior estimate of
the form
\begin{equation}
\label{eq:intro-main-stability}
\begin{aligned}
&
\|\overline{U}(\cdot,\cdot,m_1(\cdot))\|
_{L^2(0,T;H^1(\mathbb{T}^d))}^2
+
\left\|
\frac{\delta\overline{U}}{\delta m}
(\cdot,\cdot,m_1(\cdot),\cdot)
\right\|_
{L^2(0,T;H^1(\mathbb{T}^d\times\mathbb{T}^d))}^2
\\
&\leq C
\left[
\|\overline{U}(T,\cdot,m_1(T))\|_{H^1(\mathbb{T}^d)}^2
+
\left\|
\frac{\delta\overline{U}}{\delta m}
(T,\cdot,m_1(T),\cdot)
\right\|_
{H^1(\mathbb{T}^d\times\mathbb{T}^d)}^2
\right].
\end{aligned}
\end{equation}
Since
\[
U_i(T,x,m)=G_i(x,m),
\qquad
\frac{\delta U_i}{\delta m}(T,x,m,y)
=
\frac{\delta G_i}{\delta m}(x,m,y),
\qquad i=1,2,
\]
for every $m\in\mathcal P_M(\mathbb T^d)$, the quantities appearing on
the right-hand side of \eqref{eq:intro-main-stability} are determined by
the prescribed terminal functionals $G_1$ and $G_2$ and their functional
derivatives, evaluated at the terminal point $m_1(T)$ of the chosen
characteristic flow. Consequently, the estimate applies to any reference
characteristic flow issued from an initial density
$m_0\in\mathcal P_M(\mathbb T^d)$ and satisfying the assumptions imposed
below. In this sense, the result is not tied to a
particular population trajectory, although the stability estimate is
formulated along a characteristic flow in the measure space. The
principal object of interest is the functional measure derivative,
while the estimate for $\overline{U}$ provides the auxiliary control
required to close the stability analysis.

The outline of this paper is as follows.
In Section \ref{section2}, we recall the connection between the master equation and the
associated mean field game system, and characterize the functional measure
derivative through the corresponding linearized MFG system. Based on this
representation, we derive a nonlocal parabolic evolution equation satisfied by
the functional measure derivative along a characteristic measure flow.
In Section \ref{section3}, we establish the stability estimates for the difference of
two master fields. We introduce the derivative along a measure
flow and derive a Carleman estimate for the master-field difference restricted
to a reference characteristic measure flow.
In Section \ref{section4}, we derive the
equation satisfied by the difference of two measure derivatives along their
respective characteristic flows, establish the corresponding Carleman estimate,
and control the separation of the associated measure flows. Combining these
estimates, we obtain the terminal-to-interior Lipschitz stability estimate for
the functional measure derivative, which constitutes the main result of the
paper.

\section{The equation for the functional measure derivative}\label{section2}

We first recall the representation of the solution to the master equation
through the associated mean field game system. This representation is the
starting point for deriving the equation satisfied by the measure derivative
of the master equation.

\begin{lem}\label{lem:ME-MFG}
	Let $U=U(t,x,m)$ be a classical solution (see \cite[Definition~2.7]{CardaliaguetDelarueLasryLions2019}) of the master equation
	\eqref{MasterEquation}. Fix	
	$(t_0,m_0)\in[0,T]\times\mathcal{P}(\mathbb{T}^d)$.
	Let $m=m(t)$ be a characteristic measure flow associated with $U$ in \eqref{MFG}
	with $m(t_0)=m_0$, and define
	\begin{equation}\label{u restriction}
	u(t,x):=U(t,x,m(t)).
	\end{equation}
	Then $u$ satisfies the Hamilton--Jacobi equation in \eqref{MFG}. In particular,
	\begin{equation*}\label{U-u relation}
	U(t_0,x,m_0)=u(t_0,x).
	\end{equation*}
\end{lem}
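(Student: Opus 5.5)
The plan is to differentiate $u(t,x)=U(t,x,m(t))$ in time with a chain rule along the measure flow, and then use the master equation to identify the resulting terms. Here $m$ is the characteristic flow solving the Fokker--Planck equation in \eqref{MFG}, with drift generated by $u$ itself, i.e. by $D_xU(t,\cdot,m(t))$; this is the meaning of ``characteristic measure flow associated with $U$''. In the framework of \cite[Definition~2.7]{CardaliaguetDelarueLasryLions2019}, $U$ is $C^1$ in $t$, $C^2$ in $x$, and $\delta U/\delta m$ is continuous and $C^2$ in $y$. Hence, for a flow $t\mapsto m(t)$ that is weakly continuous and satisfies the Fokker--Planck equation in the distributional sense, we expect the chain rule
\[
\frac{d}{dt}U(t,x,m(t))=\partial_tU(t,x,m(t))+\Big\langle \frac{\delta U}{\delta m}(t,x,m(t),\cdot),\,\partial_t m(t)\Big\rangle .
\]
To justify it, we write
\[
U(t,x,m(s))-U(t,x,m(t))=\int_0^1\!\int \frac{\delta U}{\delta m}\big(t,x,(1-\lambda)m(t)+\lambda m(s),y\big)\,d(m(s)-m(t))(y)\,d\lambda,
\]
which follows from the definition of the functional derivative applied on the segment. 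We then test the weak Fokker--Planck equation against $y\mapsto \frac{\delta U}{\delta m}(\cdots,y)$, which is $C^2$, and pass to the limit $s\to t$ using uniform continuity of $\delta U/\delta m$ and of its $y$-derivatives on the compact set $\mathcal P(\mathbb T^d)\times\mathbb T^d$. This justification is the main technical step; the rest is algebra.

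Next we integrate by parts in $y$ on the torus. The Fokker--Planck equation reads $\partial_t m=\Delta m+\operatorname{div}(m\,D_pH(y,D_xu))$, so
\[
\Big\langle \frac{\delta U}{\delta m},\partial_tm\Big\rangle=\int \Delta_y\frac{\delta U}{\delta m}\,dm(y)-\int D_y\frac{\delta U}{\delta m}\cdot D_pH\big(y,D_xu(t,y)\big)\,dm(y).
\]
Using $D_mU=D_y\frac{\delta U}{\delta m}$, the first term equals $\int \mathrm{div}_y D_mU\,dm$. Since $u(t,y)=U(t,y,m(t))$, the drift is $D_pH(y,D_xU(t,y,m(t)))$, which is exactly the expression appearing in \eqref{MasterEquation}. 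Spatial derivatives commute with evaluation at $m(t)$, so $D_xu=D_xU(t,x,m(t))$ and $\Delta u=\Delta_xU(t,x,m(t))$.

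Substituting these identities gives
\[
-\partial_tu-\Delta u+H(x,D_xu)=\Big[-\partial_tU-\Delta_xU+H(x,D_xU)-\int\mathrm{div}_yD_mU\,dm+\int D_mU\cdot D_pH\,dm\Big]_{m=m(t)}.
\]
By \eqref{MasterEquation} the bracket equals $F(x,m(t))$, and the terminal condition gives $u(T,x)=G(x,m(T))$. Hence $u$ solves the Hamilton--Jacobi equation in \eqref{MFG}, and evaluating at $t=t_0$ yields $U(t_0,x,m_0)=u(t_0,x)$ by construction.

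If the characteristic flow is instead defined as the $m$-component of a solution $(\tilde u,m)$ of \eqref{MFG} not a priori tied to $U$, we would argue differently. We would first solve the Fokker--Planck equation with the drift $D_pH(\cdot,D_xU(t,\cdot,m))$, which is a McKean--Vlasov equation with Lipschitz coefficients. The computation above then shows that $(U(\cdot,\cdot,m(\cdot)),m)$ solves \eqref{MFG}, and uniqueness of \eqref{MFG} under the standing (monotonicity) assumptions identifies it with $(\tilde u,m)$.
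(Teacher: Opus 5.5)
Your proposal is correct and follows the paper's argument: apply the chain rule along the flow, substitute the Fokker--Planck equation with drift $D_pH\bigl(y,D_yU(t,y,m(t))\bigr)$, integrate by parts on $\mathbb{T}^d$, identify the result with the master equation evaluated at $m(t)$, and read off the terminal condition. Your extra material is sound but not needed for the paper's proof: the justification of the measure chain rule via the segment formula and the weak Fokker--Planck equation fills in a step the paper simply invokes, and the closing remark about flows not a priori tied to $U$ covers a reading the paper does not use.
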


\begin{proof}
	By the definition \eqref{u restriction}, we have
	\[
	D_xu(t,x)=D_xU(t,x,m(t)),
	\qquad
	\Delta u(t,x)=\Delta_xU(t,x,m(t)).
	\]
	Moreover, by the chain rule with respect to the measure variable, we have
	\begin{equation}\label{chain rule U}
	\begin{aligned}
	\partial_tu(t,x)
	={}
	\partial_tU(t,x,m(t))
	+
	\int_{\mathbb{T}^d}
	\frac{\delta U}{\delta m}
	(t,x,m(t),y)
	\,\partial_tm(t,y)\,dy.
	\end{aligned}
	\end{equation}
	By \eqref{MFG},
	\[
	\partial_tm(t,y)
	=
	\Delta_y m(t,y)
	+
	\operatorname{div}_y\left(
	m(t,y)
	D_pH\left(y,D_yU(t,y,m(t))\right)
	\right).
	\]
	Substituting this identity into \eqref{chain rule U} and integrating by
	parts on $\mathbb{T}^d$, we obtain
	\begin{align}
	&
	\int_{\mathbb{T}^d}
	\frac{\delta U}{\delta m}
	(t,x,m(t),y)
	\,\partial_tm(t,y)\,dy
	\nonumber\\
	={}&
	\int_{\mathbb{T}^d}
	\frac{\delta U}{\delta m}
	(t,x,m(t),y)
	\,\Delta_y m(t,y)\,dy
	\nonumber\\
	&+
	\int_{\mathbb{T}^d}
	\frac{\delta U}{\delta m}
	(t,x,m(t),y)
	\operatorname{div}_y\left(
	m(t,y)
	D_pH\left(y,D_yU(t,y,m(t))\right)
	\right)\,dy
	\nonumber\\
	={}&
	\int_{\mathbb{T}^d}
	\operatorname{div}_y
	\left(
	D_mU(t,x,m(t),y)
	\right)
	\,dm(t,y)
	\nonumber\\
	&-
	\int_{\mathbb{T}^d}
	D_mU(t,x,m(t),y)
	\cdot
	D_pH\left(
	y,D_yU(t,y,m(t))
	\right)
	\,dm(t,y).
	\label{measure chain term}
	\end{align}

    Substituting \eqref{measure chain term} into
    \eqref{chain rule U}, and using the master equation evaluated at
    $(t,x,m(t))$, gives
    \[
    -\partial_t u-\Delta u+H(x,D_xu)=F(x,m(t)).
    \]
    Finally, the terminal condition follows from the terminal condition of the
    master equation
    \[
    u(T,x)=U(T,x,m(T))=G(x,m(T)).
    \]
    This proves the lemma.
\end{proof}

We next recall the relationship between the functional measure derivative
of the master field and the linearized MFG system, see \cite{CardaliaguetDelarueLasryLions2019}.
\begin{lem}
	\label{lem:linearized-characterization-deltaU}
	Let $U=U(t,x,m)$ be the classical solution of the master equation, and fix
	$(t_0,m_0)\in[0,T]\times\mathcal P(\mathbb T^d)$. Let $(u,m)$ be the solution to the corresponding MFG system with initial condition $m(t_0)=m_0$.
	
	For an arbitrary perturbation measure $\mu_0$ with zero total mass, let $(v,\mu)$ solve the linearized MFG
	system
	\begin{equation}\label{linearized-MFG}
	\left\{
	\begin{aligned}
	&-\partial_t v-\Delta v
	+D_pH(x,D_xu)\cdot D_xv=
	\frac{\delta F}{\delta m}(x,m(t))(\mu(t)),
	\\
	&\partial_t\mu-\Delta\mu
	-\operatorname{div}\bigl(\mu D_pH(x,D_xu)\bigr)
	-\operatorname{div}\bigl(mD^2_{pp}H(x,D_xu)D_xv\bigr)=0,
	\\
	&v(T,x)=
	\frac{\delta G}{\delta m}(x,m(T))(\mu(T)),
	x\in\mathbb T^d,
	\\
	&\mu(t_0,x)=\mu_0(x),
	x\in\mathbb T^d.
	\end{aligned}
	\right.
	\end{equation}
	Here
	\begin{equation*}
	\frac{\delta F}{\delta m}(x,m(t))(\mu(t))
	:=
	\int_{\mathbb T^d}
	\frac{\delta F}{\delta m}(x,m(t),y)\,\mu(t,y)\,dy,
	\end{equation*}
	and the term involving $\delta G/\delta m$ is understood in the same sense.
	
	Then the measure derivative of the master equation solution is characterized by
	\begin{equation*}
	v(t_0,x)
	=
	\int_{\mathbb T^d}
	\frac{\delta U}{\delta m}(t_0,x,m_0,y)\,\mu_0(y)\,dy.
	\end{equation*}
\end{lem}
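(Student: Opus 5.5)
The plan is to show that the candidate
\[
\tilde v(t,x):=\int_{\mathbb T^d}\frac{\delta U}{\delta m}(t,x,m(t),y)\,\mu(t,y)\,dy
\]
solves the same backward linear equation as $v$, with the same terminal datum, when $\mu$ is the measure component of \eqref{linearized-MFG}. Uniqueness for that linear problem, with $\mu$ frozen, then gives $v=\tilde v$, and evaluating at $t=t_0$ yields the claim. We may also assume the standard differentiability of the MFG flow with respect to the initial measure from \cite{CardaliaguetDelarueLasryLions2019}. In that language the statement says $v$ is the directional derivative of $u^{m_0}(t_0,\cdot)=U(t_0,\cdot,m_0)$ in the direction $\mu_0$, by Lemma~\ref{lem:ME-MFG}. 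The direct verification below is more self-contained, so we follow it.

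\textbf{Step 1: differentiate the master equation in $m$.} Apply $\delta/\delta m$ at a fixed point $(t,x,m)$ to \eqref{MasterEquation}. This gives a linear equation for $W(t,x,m,y):=\frac{\delta U}{\delta m}(t,x,m,y)$:
\begin{itemize}
\item a local part $-\partial_tW-\Delta_xW+D_pH(x,D_xU)\cdot D_xW$;
\item the $m$-derivatives of the two nonlocal integrals, each producing a term in which the integrand is replaced by its second functional derivative, plus a term from differentiating the measure $dm(y)$ itself;
\item the derivative of $D_pH(y,D_xU(t,y,m))$ with respect to $m$, which yields $D^2_{pp}H\,D_xW(t,y',m,\cdot)$;
\end{itemize}
all of this is set equal to $\frac{\delta F}{\delta m}$. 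We then pair with $\mu(t,y)\,dy$. Zero total mass of $\mu$ makes the normalization constants irrelevant.

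\textbf{Step 2: the time derivative of $\tilde v$.} Compute $\partial_t\tilde v$ by the chain rule used in \eqref{chain rule U}. It has three sources:
\begin{itemize}
\item $\partial_tW$;
\item a second-order measure term $\iint \frac{\delta^2U}{\delta m^2}(\ldots,y,z)\,\mu(y)\,\partial_tm(z)$;
\item the term $\int W\,\partial_t\mu$.
\end{itemize}
Substitute the Fokker--Planck equations for $\partial_tm$ (from \eqref{MFG}) and for $\partial_t\mu$ (from \eqref{linearized-MFG}), and integrate by parts in $y$ and $z$ exactly as in \eqref{measure chain term}. The second-order measure terms should cancel the corresponding second-derivative contributions from Step 1. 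The $\Delta\mu$ and $\operatorname{div}(\mu D_pH)$ pieces should cancel the "derivative of $dm$" terms. The remaining piece $-\int D_yW\cdot m\,D^2_{pp}H\,D_x\tilde v$ should cancel the $D^2_{pp}H$ term from Step 1. Here one uses $D_xu(t,y)=D_xU(t,y,m(t))$ and the fact that $\int D_x W(t,y',m,y)\mu(y)dy=D_x\tilde v(t,y')$.

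What survives is
\[
-\partial_t\tilde v-\Delta\tilde v+D_pH(x,D_xu)\cdot D_x\tilde v=\frac{\delta F}{\delta m}(x,m(t))(\mu(t)),
\]
with $\tilde v(T)=\frac{\delta G}{\delta m}(x,m(T))(\mu(T))$, the latter from $U(T)=G$.

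\textbf{Step 3: uniqueness.} With $\mu$ now fixed, $v-\tilde v$ solves a linear backward parabolic equation with bounded drift, zero source and zero terminal data. Hence it vanishes by the maximum principle or an energy estimate.

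The main obstacle is Step 2. It requires regularity of $U$ at second order in $m$ (so that $\frac{\delta^2U}{\delta m^2}$ exists and is regular in $y,z$), which the classical solution framework of \cite{CardaliaguetDelarueLasryLions2019} supplies. It also requires careful bookkeeping of signs in the cancellation of the cross terms $D^2_{pp}H$ and of the derivative of $dm$. If that bookkeeping proves unwieldy, the fallback is the finite-difference route: set $m_s(t_0)=m_0+s\mu_0$, apply Lemma~\ref{lem:ME-MFG} to each $s$, and take $s\to0$. This uses the known Lipschitz and differentiability estimates of MFG solutions in the initial measure, so that $(u_s-u)/s\to v$.
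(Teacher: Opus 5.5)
\textbf{Gap in Step~2.} Your Step~2 does not close as written. In the linearized Fokker--Planck equation of \eqref{linearized-MFG} the coupling term is $\operatorname{div}\bigl(m\,D^2_{pp}H(x,D_xu)\,D_xv\bigr)$. Here $v$ is the Hamilton--Jacobi component of the linearized system, not your candidate $\tilde v$. So after integration by parts, the term of $\int W\,\partial_t\mu\,dy$ that remains is $-\int D_yW\cdot A\,D_xv\,dm(t)$, with $A(t,y)=D^2_{pp}H(y,D_xu(t,y))$. The $D^2_{pp}H$-term produced in Step~1 is instead $\int D_yW\cdot A\,D_x\tilde v\,dm(t)$, precisely because $\int D_xW(t,y',m,y)\mu(y)\,dy=D_x\tilde v(t,y')$. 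Saying that these two terms cancel amounts to assuming $v=\tilde v$, which is the conclusion.

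The other cancellations you list do go through. The second-order ones use the symmetry of $\delta^2U/\delta m^2$, up to one-variable terms that drop out against the zero-mass $\mu(t)$. Carrying the bookkeeping through, with $b=D_pH(x,D_xu)$, what you actually obtain is
\[
-\partial_t\tilde v-\Delta\tilde v+b\cdot D_x\tilde v
=\frac{\delta F}{\delta m}(x,m(t))(\mu(t))
+\int_{\mathbb T^d}D_mU(t,x,m(t),y)\cdot A(t,y)\,D_x(v-\tilde v)(t,y)\,dm(t,y).
\]
Hence $\tilde v$ does not solve the frozen-$\mu$ linear problem. Step~3, which treats a local equation with bounded drift via the maximum principle, therefore does not apply to $v-\tilde v$.

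\textbf{Repair and comparison.} The argument can be repaired inside your framework. The function $w:=v-\tilde v$ solves the homogeneous \emph{nonlocal} problem
\[
-\partial_tw-\Delta w+b\cdot D_xw+\int_{\mathbb T^d}D_mU(t,x,m(t),y)\cdot A(t,y)\,D_xw(t,y)\,dm(t,y)=0,\qquad w(T,\cdot)=0.
\]
This has the structure of the operator $\mathcal P$ along a flow in Theorem~\ref{thm:full-master-Carleman}, and you need a uniqueness argument for it. One option is an $L^2$ energy estimate with backward Gronwall on $[t_0+\varepsilon,T]$, then letting $\varepsilon\to0$. On that interval $m(t)$ has a bounded density, so the nonlocal term is controlled by $C\|D_xw(t)\|_{L^2}$ as in \eqref{eq:N1-L2-bound}.

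Repaired this way, the direct route only needs uniqueness for this decoupled linear equation, not for the coupled linearized system, so no monotonicity is required. The price is second-order differentiability of $U$ in $m$, which goes beyond the first-order notion of classical solution in the hypothesis.

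The paper itself gives no proof and simply cites \cite{CardaliaguetDelarueLasryLions2019}. There the identity comes essentially from your fallback: $U$ is constructed from the MFG system, and the linearized system is shown to give the first-order expansion of $m_0\mapsto u(t_0,\cdot)$. That step relies on the monotonicity-based stability estimates of the framework.
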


Lemma \ref{lem:linearized-characterization-deltaU} establishes a fundamental connection between the master equation and the associated linearized MFG system. It shows that the derivative of the master equation solution with respect to the measure variable can be characterized by the response of the MFG system to perturbations of the initial distribution.

Since every point $(t,m(t))$ along the flow of measures generated by the MFG system may be viewed as a new initial state of the master equation, the representation formula of Lemma
\ref{lem:linearized-characterization-deltaU}
extends immediately from $(t_0,m_0)$ to $(t,m(t))$.

\begin{cor}\label{deltaUm}
	\label{cor:deltaU-along-flow}
	Let $(u,m)$ be the solution to the MFG system generated by the master equation \eqref{MasterEquation}.
	Then for any $t\in[0,T]$, one has
	\begin{equation*}
	v(t,x)=
	\int_{\mathbb T^d}
	\frac{\delta U}{\delta m}
	\bigl(t,x,m(t),y\bigr)
	\mu(t,y)\,dy.
	\label{eq:deltaU-along-trajectory}
	\end{equation*}
	where $(v,\mu)$ denotes the solution of the linearized MFG system \eqref{linearized-MFG}.
\end{cor}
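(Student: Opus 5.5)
The corollary follows from Lemma~\ref{lem:linearized-characterization-deltaU} applied at a new initial time, once we check that the restriction of $(u,m,v,\mu)$ to $[t,T]$ is exactly the data that lemma produces when started from $(t,m(t),\mu(t))$. Fix $t\in[t_0,T]$; the case $t=t_0$ is Lemma~\ref{lem:linearized-characterization-deltaU} itself.

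\textbf{Step 1 (flow property of the MFG system).} Let $(\tilde u,\tilde m)$ denote the solution of \eqref{MFG} on $[t,T]$ with initial condition $\tilde m(t)=m(t)$. The restriction of $(u,m)$ to $[t,T]$ solves the same forward--backward system with the same initial measure and the same terminal condition $u(T)=G(\cdot,m(T))$. By uniqueness of the MFG system in the classical framework of \cite{CardaliaguetDelarueLasryLions2019} (monotonicity assumptions), $(\tilde u,\tilde m)=(u,m)|_{[t,T]}$. Equivalently, since $m$ is a characteristic flow of $U$, Lemma~\ref{lem:ME-MFG} gives $u(s,\cdot)=U(s,\cdot,m(s))$ for all $s\in[t,T]$. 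Either description identifies the restarted flow with the original one.

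\textbf{Step 2 (flow property of the linearized system).} The solution $(v,\mu)$ of \eqref{linearized-MFG} started at $t_0$ from $\mu_0$, restricted to $[t,T]$, solves \eqref{linearized-MFG} on $[t,T]$ with three features:
\begin{itemize}
\item its coefficients are built from $(u,m)$, which by Step 1 are the coefficients generated by $(t,m(t))$;
\item its terminal condition is $v(T)=\frac{\delta G}{\delta m}(\cdot,m(T))(\mu(T))$;
\item its initial datum is $\mu(t)$.
\end{itemize}
We also need $\mu(t)$ to be an admissible perturbation, i.e.\ to have zero total mass. This holds because the $\mu$-equation is in divergence form on $\mathbb T^d$, so $\frac{d}{ds}\int\mu(s)\,dy=0$ and hence $\int\mu(t)=\int\mu_0=0$. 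Well-posedness (uniqueness) of the linearized system, again from \cite{CardaliaguetDelarueLasryLions2019}, then shows that $(v,\mu)|_{[t,T]}$ is the solution Lemma~\ref{lem:linearized-characterization-deltaU} associates with $(t,m(t),\mu(t))$.

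\textbf{Step 3 (conclusion).} Applying Lemma~\ref{lem:linearized-characterization-deltaU} with $(t_0,m_0,\mu_0)$ replaced by $(t,m(t),\mu(t))$ gives $v(t,x)=\int\frac{\delta U}{\delta m}(t,x,m(t),y)\mu(t,y)\,dy$. If $\mu(t)$ is merely a signed measure rather than a density, the integral is read as a pairing. The normalization $\int\frac{\delta U}{\delta m}\,dm=0$ plays no role here because $\mu(t)$ has zero mass.

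The main obstacle is the uniqueness invoked in Steps 1--2. Lemma~\ref{lem:linearized-characterization-deltaU} is stated for the solution, so we must know that the restarted systems have unique solutions within the class where that lemma applies. We would take this from the existence and uniqueness theory for the MFG and linearized MFG systems in \cite{CardaliaguetDelarueLasryLions2019} under the standing assumptions, and check that $\mu(t)$ lies in the required regularity class, namely the dual space used there. Propagation of regularity for the linear Fokker--Planck-type equation supplies this membership.
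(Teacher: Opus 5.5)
Your proposal is correct and follows the same route as the paper, which justifies the corollary in a single sentence by regarding $(t,m(t))$ as a new initial state and applying Lemma~\ref{lem:linearized-characterization-deltaU} there. Your version makes explicit what the paper leaves implicit: by uniqueness, the restrictions of $(u,m)$ and $(v,\mu)$ to $[t,T]$ coincide with the restarted solutions, and $\mu(t)$ remains an admissible zero-mass perturbation because the linearized Fokker--Planck equation is in divergence form.
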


\begin{rem}
	This observation is of particular importance for our subsequent analysis. It allows us to represent the measure derivative by an integral kernel and to study its properties through the corresponding linearized partial differential equations.
\end{rem}

Motivated by Corollary \ref{deltaUm}, we introduce
\begin{equation}
K(t,x,y):=
\frac{\delta U}{\delta m}(t,x,m(t),y),
\qquad (t,x,y)\in[0,T]\times\mathbb T^d\times\mathbb T^d .
\label{eq:K-definition}
\end{equation}
Here $m(t)$ is the population distribution along the flow of measures generated by the MFG system \eqref{MFG}.
Thus $K$ is the restriction of the measure derivative of the master equation
to the curve $t\mapsto m(t)$.

For simplicity, we denote
\begin{equation*}
b(t,x):=D_pH\bigl(x,D_xu(t,x)\bigr),
\qquad
A(t,x):=D^2_{pp}H\bigl(x,D_xu(t,x)\bigr).
\label{eq:a-B-definition}
\end{equation*}

\begin{thm}
	\label{prop:equation-for-K}
	Assume that the master equation \eqref{MasterEquation} admits a classical solution $U(t,x,m)$ and that the corresponding MFG system \eqref{MFG} admits the solution $(u,m)$ with its linearized system \eqref{linearized-MFG}.
	Then the function $K$ defined by \eqref{eq:K-definition} satisfies
	\begin{equation}
	\left\{
	\begin{aligned}
	&-\partial_t K(t,x,y)
	-\Delta_x K(t,x,y)
	-\Delta_y K(t,x,y)
	+b(t,x)\cdot D_xK(t,x,y)
	+b(t,y)\cdot D_yK(t,x,y)
	\\
	&\quad
	+\int_{\mathbb T^d}
	m(t,z)\,
	D_yK(t,x,z)\cdot
	A(t,z)D_xK(t,z,y)\,dz=
	\frac{\delta F}{\delta m}(x,m(t),y),	
	\\
	&K(T,x,y)=
	\frac{\delta G}{\delta m}(x,m(T),y),
	\qquad (x,y)\in\mathbb T^d\times\mathbb T^d.
	\end{aligned}
	\right.
	\label{eq:K-equation}
	\end{equation}
	for $(t,x,y)\in(0,T)\times\mathbb T^d\times\mathbb T^d$,
	with the normalization of the functional derivative
	\begin{equation}
	\int_{\mathbb T^d}K(t,x,y)\,dm(t,y)=0.
	\label{eq:K-normalization}
	\end{equation}
\end{thm}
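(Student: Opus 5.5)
We plan to derive \eqref{eq:K-equation} from the representation in Corollary~\ref{deltaUm}. The idea is to substitute $v(t,x)=\int K(t,x,y)\mu(t,y)\,dy$ into the linearized system \eqref{linearized-MFG} and use the arbitrariness of the perturbation $\mu_0$. The terminal condition is immediate, since $K(T,x,y)=\frac{\delta U}{\delta m}(T,x,m(T),y)=\frac{\delta G}{\delta m}(x,m(T),y)$ by the terminal condition of the master equation, which holds identically in $m$ and may therefore be differentiated in $m$. The normalization \eqref{eq:K-normalization} is just the adopted normalization of the functional derivative evaluated at $m=m(t)$.

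For the interior equation, fix $t_0$ and a zero-mass smooth $\mu_0$, and let $(v,\mu)$ solve \eqref{linearized-MFG}. Differentiating $v(t,x)=\int K(t,x,y)\mu(t,y)\,dy$ in $t$ gives
\[
\partial_t v=\int \partial_t K\,\mu\,dy+\int K\,\partial_t\mu\,dy .
\]
We then use the $\mu$-equation for $\partial_t\mu$ and integrate by parts in $y$ on $\mathbb T^d$. The term $\int K\,\Delta\mu\,dy$ becomes $\int \Delta_yK\,\mu\,dy$. The term $\int K\operatorname{div}(\mu b)\,dy$ becomes $-\int D_yK\cdot b(t,y)\,\mu\,dy$. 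The term $\int K\operatorname{div}(mAD_xv)\,dy$ becomes $-\int m(t,z)D_yK(t,x,z)\cdot A(t,z)D_xv(t,z)\,dz$. Substituting $D_xv(t,z)=\int D_xK(t,z,y)\mu(t,y)\,dy$ and applying Fubini turns this last term into
\[
-\int\!\Big(\int m(t,z)\,D_yK(t,x,z)\cdot A(t,z)D_xK(t,z,y)\,dz\Big)\mu(t,y)\,dy .
\]
Likewise, $\Delta v$ and $b\cdot D_xv$ are $\int(\Delta_xK)\mu\,dy$ and $\int b(t,x)\cdot D_xK\,\mu\,dy$, and the right-hand side is $\int\frac{\delta F}{\delta m}(x,m(t),y)\mu(t,y)\,dy$. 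Plugging everything into the $v$-equation gives
\[
\int \mathcal L K(t,x,y)\,\mu(t,y)\,dy=0,
\]
where $\mathcal L K$ denotes the left-hand side of \eqref{eq:K-equation} minus $\frac{\delta F}{\delta m}$. All signs should be checked, but they match the statement.

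The main obstacle is passing from $\int\mathcal LK\,\mu(t,\cdot)\,dy=0$ to $\mathcal LK\equiv0$ pointwise. This needs $\mu(t,\cdot)$ to range over a sufficiently rich class. We evaluate the identity at $t=t_0$, where $\mu(t_0)=\mu_0$ is an arbitrary zero-mass density. This is legitimate because $t_0$ is arbitrary and the flow $m$ through $(t_0,m(t_0))$ is the same characteristic (Corollary~\ref{deltaUm}). The identity then says that $y\mapsto\mathcal LK(t_0,x,y)$ integrates to zero against every zero-mass function, so $\mathcal LK(t_0,x,y)=c(t_0,x)$, a constant in $y$.

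To remove the constant $c(t_0,x)$ we use the normalization. We will redo the computation with the $m$-consistent normalization \eqref{eq:K-normalization}, i.e. integrate $\mathcal LK$ against $m(t,y)$. Alternatively, we note that the additive freedom in $\delta U/\delta m$ is absorbed exactly by fixing \eqref{eq:K-normalization}, with the equation understood modulo functions of $(t,x)$. We expect a residual term of the form $c(t,x)$ to appear here, coming from differentiating $\int K\,dm(t)=0$ in time. Handling it requires either verifying that it vanishes under the stated normalization or interpreting \eqref{eq:K-equation} up to this normalization, and we regard this as the delicate point. Regularity for differentiating under the integral and integrating by parts will be taken from the classical-solution assumptions of \cite{CardaliaguetDelarueLasryLions2019}.
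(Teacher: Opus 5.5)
Your derivation is the same as the paper's up to $\mathcal L K(t_0,x,y)=c(t_0,x)$, and your signs agree with it: you substitute $v=\int K\mu\,dy$ into the linearized system, integrate by parts in $y$, and evaluate at the initial time of a restarted linearized system with arbitrary zero-mass data. The gap is the last step. You never show that $c\equiv 0$, and you predict a leftover term coming from $\frac{d}{dt}\int K\,dm(t)=0$. Your fallback of reading \eqref{eq:K-equation} ``modulo functions of $(t,x)$'' would not prove the theorem as stated. The theorem asserts the equation exactly, and the normalization \eqref{eq:K-normalization} has already removed the additive freedom in $\delta U/\delta m$.

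The missing computation is short, and nothing is left over. Integrate $\mathcal L K(t,x,\cdot)=c(t,x)$ against the probability measure $m(t)$. Since $\int c(t,x)\,dm(t,y)=c(t,x)$, it suffices to show that each group of terms integrates to zero.

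(i) Differentiate $\int K(t,x,y)m(t,y)\,dy=0$ in $t$ and use $\partial_t m=\Delta m+\operatorname{div}(mb)$. This gives exactly $\int\bigl(-\partial_tK-\Delta_yK+b(t,y)\cdot D_yK\bigr)\,dm(t,y)=0$. So the time derivative of the normalization does not produce a residual; it cancels the whole $y$-part of the operator.

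(ii) Differentiate the normalization in $x$ to get $\int D_xK\,dm(t)=\int \Delta_xK\,dm(t)=0$. This removes $-\Delta_xK+b(t,x)\cdot D_xK$.

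(iii) For the nonlocal term, Fubini together with (ii) applied at the point $z$ gives $\int_{\mathbb T^d} m(t,z)D_yK(t,x,z)\cdot A(t,z)\bigl[\int_{\mathbb T^d} D_xK(t,z,y)\,dm(t,y)\bigr]dz=0$.

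(iv) $\int\frac{\delta F}{\delta m}(x,m(t),y)\,dm(t,y)=0$ by the same normalization convention.

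Hence $c\equiv 0$. With this step added, your argument coincides with the paper's proof.
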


\begin{proof}
	Let $(v,\mu)$ be the solution of the linearized MFG system associated with an
	admissible perturbation of the initial distribution. By Corollary \ref{deltaUm}, for every
	$t\in[0,T]$,
	\begin{equation}\label{eq:v-K-mu}
	v(t,x)
	=
	\int_{\mathbb T^d}
	K(t,x,y)\mu(t,y)\,dy.
	\end{equation}
	Differentiating \eqref{eq:v-K-mu} with respect to $t$, we obtain
	\begin{equation*}\label{eq:vt-K}
	\partial_t v(t,x)
	=
	\int_{\mathbb T^d}
	\partial_tK(t,x,y)\mu(t,y)\,dy
	+
	\int_{\mathbb T^d}
	K(t,x,y)\partial_t\mu(t,y)\,dy.
	\end{equation*}
	
	By the linearized Fokker--Planck equation in \eqref{linearized-MFG},
	\begin{equation*}\label{eq:mu-evolution}
	\partial_t\mu
	=
	\Delta\mu
	+
	\operatorname{div}(\mu b)
	+
	\operatorname{div}\bigl(mA D_xv\bigr).
	\end{equation*}
	Using integration by parts on $\mathbb T^d$, we obtain
	\begin{equation}\label{eq:K-mu-t}
	\begin{aligned}
	-
	\int_{\mathbb T^d}
	K(t,x,y)\partial_t\mu(t,y)\,dy
	={}&
	-\int_{\mathbb T^d}
	\Delta_yK(t,x,y)\mu(t,y)\,dy
	\\
	&+
	\int_{\mathbb T^d}
	D_yK(t,x,y)\cdot b(t,y)\mu(t,y)\,dy
	\\
	&+
	\int_{\mathbb T^d}
	D_yK(t,x,y)\cdot
	m(t,y)A(t,y)D_xv(t,y)\,dy.
	\end{aligned}
	\end{equation}
	
	Moreover, from \eqref{eq:v-K-mu},
	\begin{equation}\label{eq:Dv-K}
	D_xv(t,y)
	=
	\int_{\mathbb T^d}
	D_xK(t,y,z)\mu(t,z)\,dz.
	\end{equation}
	Substituting \eqref{eq:Dv-K} into the last term of
	\eqref{eq:K-mu-t} gives
	\begin{equation}\label{eq:nonlocal-K-term}
	\begin{aligned}
	&
	\int_{\mathbb T^d}
	D_yK(t,x,y)\cdot
	m(t,y)A(t,y)D_xv(t,y)\,dy
	\\
	&\qquad
	=
	\int_{\mathbb T^d}
	\left[
	\int_{\mathbb T^d}
	m(t,z)
	D_yK(t,x,z)\cdot
	A(t,z)D_xK(t,z,y)\,dz
	\right]
	\mu(t,y)\,dy.
	\end{aligned}
	\end{equation}
	
	On the other hand, the linearized Hamilton--Jacobi equation is
	\begin{equation}\label{eq:linearized-HJ-short}
	-\partial_tv(t,x)
	-\Delta_xv(t,x)
	+b(t,x)\cdot D_xv(t,x)
	=
	\int_{\mathbb T^d}
	\frac{\delta F}{\delta m}
	\bigl(x,m(t),y\bigr)\mu(t,y)\,dy.
	\end{equation}
	Using \eqref{eq:v-K-mu}, \eqref{eq:K-mu-t}, and
	\eqref{eq:nonlocal-K-term}, equation
	\eqref{eq:linearized-HJ-short} can be rewritten as
	\begin{equation}\label{eq:E-mu-identity}
	\int_{\mathbb T^d}
	\mathcal E(t,x,y)\mu(t,y)\,dy=0,
	\end{equation}
	where
	\begin{equation}\label{eq:E-definition}
	\begin{aligned}
	\mathcal E(t,x,y)
	:={}&
	-\partial_tK(t,x,y)
	-\Delta_xK(t,x,y)
	-\Delta_yK(t,x,y)
	\\
	&+
	b(t,x)\cdot D_xK(t,x,y)
	+
	b(t,y)\cdot D_yK(t,x,y)
	\\
	&+
	\int_{\mathbb T^d}
	m(t,z)
	D_yK(t,x,z)\cdot
	A(t,z)D_xK(t,z,y)\,dz
	\\
	&-
	\frac{\delta F}{\delta m}
	\bigl(x,m(t),y\bigr).
	\end{aligned}
	\end{equation}
	
	Fix an arbitrary $\tau\in(0,T)$. Since the point
	$(\tau,m(\tau))$ may be regarded as a new initial state of the MFG system,
	we may consider the linearized system on $[\tau,T]$ with an arbitrary
	admissible perturbation $\eta$ satisfying
	\begin{equation}\label{eq:eta-zero-mass}
	\int_{\mathbb T^d}\eta(y)\,dy=0
	\end{equation}
	and
	\begin{equation*}\label{eq:mu-tau-eta}
	\mu(\tau,\cdot)=\eta.
	\end{equation*}
	Evaluating \eqref{eq:E-mu-identity} at $t=\tau$ therefore yields
	\begin{equation*}\label{eq:E-zero-mass}
	\int_{\mathbb T^d}
	\mathcal E(\tau,x,y)\eta(y)\,dy=0
	\end{equation*}
	for every admissible perturbation $\eta$ satisfying
	\eqref{eq:eta-zero-mass}. Hence
	$\mathcal E(\tau,x,\cdot)$ is independent of the variable $y$.
	Since $\tau\in(0,T)$ is arbitrary, there exists a function $c=c(t,x)$
	such that
	\begin{equation}\label{eq:E-constant}
	\mathcal E(t,x,y)=c(t,x),
	\qquad
	(t,x,y)\in(0,T)\times\mathbb T^d\times\mathbb T^d.
	\end{equation}
	
    We can also prove that $c(t,x)=0$. By the normalization
	\eqref{eq:K-normalization},
	\begin{equation}\label{eq:normalization-time}
	\int_{\mathbb T^d}
	K(t,x,y)m(t,y)\,dy=0.
	\end{equation}
	Differentiating \eqref{eq:normalization-time} with respect to $t$ and using
	the Fokker--Planck equation
	\begin{equation*}\label{eq:m-evolution-for-normalization}
	\partial_tm
	=
	\Delta m+\operatorname{div}(mb),
	\end{equation*}
	we obtain
	\begin{equation*}\label{eq:normalization-time-derivative}
	\begin{aligned}
	0
	={}&
	\int_{\mathbb T^d}
	\partial_tK(t,x,y)m(t,y)\,dy
	+
	\int_{\mathbb T^d}
	K(t,x,y)\partial_tm(t,y)\,dy
	\\
	={}&
	\int_{\mathbb T^d}
	\Bigl(
	\partial_tK(t,x,y)
	+\Delta_yK(t,x,y)
	-b(t,y)\cdot D_yK(t,x,y)
	\Bigr)
	m(t,y)\,dy.
	\end{aligned}
	\end{equation*}
	Therefore,
	\begin{equation}\label{eq:y-part-zero}
	\int_{\mathbb T^d}
	\Bigl(
	-\partial_tK
	-\Delta_yK
	+b(t,y)\cdot D_yK
	\Bigr)(t,x,y)\,dm(t,y)
	=0.
	\end{equation}
	
	On the other hand, differentiating \eqref{eq:K-normalization} with respect
	to $x$ gives
	\begin{equation}\label{eq:x-normalization}
	\int_{\mathbb T^d}
	D_xK(t,x,y)\,dm(t,y)=0,
	\qquad
	\int_{\mathbb T^d}
	\Delta_xK(t,x,y)\,dm(t,y)=0.
	\end{equation}
	Consequently,
	\begin{equation}\label{eq:x-part-zero}
	\int_{\mathbb T^d}
	\Bigl(
	-\Delta_xK(t,x,y)
	+b(t,x)\cdot D_xK(t,x,y)
	\Bigr)\,dm(t,y)
	=0.
	\end{equation}
	
	For the nonlocal term, by Fubini's theorem and
	\eqref{eq:x-normalization},
	\begin{equation}\label{eq:nonlocal-normalization-zero}
	\begin{aligned}
	&
	\int_{\mathbb T^d}
	\left[
	\int_{\mathbb T^d}
	m(t,z)
	D_yK(t,x,z)\cdot
	A(t,z)D_xK(t,z,y)\,dz
	\right]dm(t,y)
	\\
	&\qquad
	=
	\int_{\mathbb T^d}
	m(t,z)D_yK(t,x,z)\cdot A(t,z)
	\left[
	\int_{\mathbb T^d}
	D_xK(t,z,y)\,dm(t,y)
	\right]dz
	=0.
	\end{aligned}
	\end{equation}
	Furthermore, by the normalization of the measure derivative, we have
	\begin{equation}\label{eq:F-normalization-along-flow}
	\int_{\mathbb T^d}
	\frac{\delta F}{\delta m}
	\bigl(x,m(t),y\bigr)\,dm(t,y)=0.
	\end{equation}
	
	Integrating \eqref{eq:E-constant} with respect to $m(t)$ and using
	\eqref{eq:y-part-zero}, \eqref{eq:x-part-zero},
	\eqref{eq:nonlocal-normalization-zero}, and
	\eqref{eq:F-normalization-along-flow}, we obtain
	\begin{equation*}\label{eq:c-zero}
	c(t,x)=0.
	\end{equation*}
	Hence $\mathcal E(t,x,y)=0$, and by
	\eqref{eq:E-definition} we obtain precisely
	\eqref{eq:K-equation}.
	
	Finally, since
	\begin{equation*}\label{eq:terminal-U-G}
	U(T,x,m)=G(x,m)
	\end{equation*}
	for every admissible $m$, their functional derivatives with respect to
	the measure variable coincide up to an additive constant in $y$.
	Since both functional derivatives are taken with the same normalization
	convention, this constant is zero. Therefore, we obtain that
	\begin{equation*}\label{eq:terminal-K-identification}
	K(T,x,y)
	=
	\frac{\delta U}{\delta m}
	\bigl(T,x,m(T),y\bigr)
	=
	\frac{\delta G}{\delta m}
	\bigl(x,m(T),y\bigr).
	\end{equation*}
    The normalization
	\eqref{eq:K-normalization} follows directly from the definition
	\eqref{eq:K-definition} and the normalization of
	$\delta U/\delta m$. This completes the proof.
\end{proof}

\section{Lipschitz stability estimates for the master field}\label{section3}

We next introduce the derivative of a functional along a measure flow.
This identity will be useful in the subsequent analysis of the master
equation along the associated flow of probability measures.

\begin{lem}\label{lem:characteristic-derivative}
	Let
	$V:[t_0,T]\times\mathbb{T}^d\times\mathcal{P}(\mathbb{T}^d)
	\longrightarrow\mathbb{R}$
	be continuously differentiable in $t$ and of class $C^1$ with respect
	to the measure variable $m$. Assume that its functional derivative
	$\delta V/\delta m$ is twice differentiable with respect to $y$, the
	corresponding derivatives being continuous in all variables.
	Let
	$\beta=\beta(t,y,m)$
	be a vector field continuous in all variables and continuously
	differentiable with respect to $y$. Let $m$ be a flow of
	probability measures, satisfying
	\begin{equation}\label{eq:general-measure-flow}
	\begin{cases}
	\displaystyle
	\partial_t m(t,y)-\Delta_y m(t,y)
	-\operatorname{div}_y
	\left(
	m\cdot\beta(t,y,m(t))
	\right)=0,
	&
	(t,y)\in(t_0,T)\times\mathbb{T}^d,
	\\[1mm]
	m(t_0,y)=m_0(y),
	&
	y\in\mathbb{T}^d.
	\end{cases}
	\end{equation}
	
	Define
	\begin{equation*}\label{eq:characteristic-derivative}
	\begin{aligned}
	\mathcal{D}_{\beta}V(t,x,m)
	:={}&
	\partial_tV(t,x,m)
	+
	\int_{\mathbb{T}^d}
	\operatorname{div}_y
	\left(
	D_mV(t,x,m,y)
	\right)
	\,dm(y)
	\\
	&-
	\int_{\mathbb{T}^d}
	D_mV(t,x,m,y)
	\cdot
	\beta(t,y,m)
	\,dm(y).
	\end{aligned}
	\end{equation*}
	Then, along the measure flow $m(t)$, we have that
	\begin{equation*}\label{eq:characteristic-derivative-identity}
	\frac{d}{dt}V(t,x,m(t))
	=
	\mathcal{D}_{\beta}V(t,x,m(t)),
	\qquad
	(t,x)\in(t_0,T)\times\mathbb{T}^d.
	\end{equation*}
\end{lem}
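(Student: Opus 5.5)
The plan is to reproduce the computation in the proof of Lemma~\ref{lem:ME-MFG}, with a general drift $\beta$ in place of $D_pH(y,D_xU)$. First I establish a chain rule for $t\mapsto V(t,x,m(t))$. For $h>0$ I split
\[
V(t+h,x,m(t+h))-V(t,x,m(t))
=\bigl[V(t+h,x,m(t+h))-V(t,x,m(t+h))\bigr]+\bigl[V(t,x,m(t+h))-V(t,x,m(t))\bigr].
\]
For the first bracket I use the continuity of $\partial_tV$ in all variables, together with the continuity of $h\mapsto m(t+h)$ in $\mathcal P(\mathbb T^d)$, which holds because $m$ solves \eqref{eq:general-measure-flow}. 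This gives $h\,\partial_tV(t,x,m(t))+o(h)$.

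For the second bracket I use the standard consequence of the definition of $\delta V/\delta m$ for $C^1$ functionals:
\[
V(m')-V(m)=\int_0^1\int_{\mathbb T^d}\frac{\delta V}{\delta m}\bigl((1-s)m+sm',y\bigr)\,d(m'-m)(y)\,ds.
\]
I take $m=m(t)$ and $m'=m(t+h)$ and divide by $h$. Continuity of $\delta V/\delta m$ in $m$ then gives
\[
\int_{\mathbb T^d}\frac{\delta V}{\delta m}(t,x,m(t),y)\,\partial_tm(t,y)\,dy
\]
in the limit. When the flow has no density, the time derivative is read in the distributional (weak) sense.

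Next I substitute the equation \eqref{eq:general-measure-flow}, namely $\partial_t m=\Delta_y m+\operatorname{div}_y(m\beta)$, and integrate by parts on $\mathbb T^d$, which has no boundary terms. I move two derivatives onto $\delta V/\delta m$ in the Laplacian term and one derivative in the divergence term, using $D_y\frac{\delta V}{\delta m}=D_mV$. This yields
\[
\int\operatorname{div}_y D_mV\,dm(t,y)-\int D_mV\cdot\beta(t,y,m(t))\,dm(t,y),
\]
which is exactly the measure part of $\mathcal D_\beta V$. Adding back the $\partial_tV$ term gives the identity. The same argument handles $h<0$.

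The main obstacle is justifying the limit in the measure-increment step. Here $\partial_t m$ is only a distribution, given by a divergence-form right-hand side. To handle this I would use the weak formulation of \eqref{eq:general-measure-flow} with the test function $\varphi_s(y)=\frac{\delta V}{\delta m}(t,x,m_s,y)$, where $m_s=(1-s)m(t)+s\,m(t+h)$. This test function is $C^2$ in $y$ by assumption, so
\[
\int\varphi\,d(m(t+h)-m(t))=\int_t^{t+h}\int\bigl(\Delta_y\varphi-\beta\cdot D_y\varphi\bigr)\,dm(r)\,dr.
\]
Uniform continuity of $\frac{\delta V}{\delta m}$ and its first two $y$-derivatives, on the compact set $[t_0,T]\times\mathbb T^d\times\mathcal P(\mathbb T^d)\times\mathbb T^d$, then lets me pass to the limit. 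Because the integration by parts has already been carried out through the weak formulation, no regularity of $m$ beyond weak continuity is needed.
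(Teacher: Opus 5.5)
Your proof is correct and follows the same route as the paper: apply the chain rule along the flow, substitute the Fokker--Planck equation, and integrate by parts on $\mathbb T^d$. The paper carries this out formally, writing $\partial_t m(t,y)\,dy$ and integrating by parts directly; you instead justify the chain rule through the segment formula for $\frac{\delta V}{\delta m}$ and the weak formulation with the $C^2$ test functions $\frac{\delta V}{\delta m}(t,x,m_s,\cdot)$, so your version needs no time-differentiable density of $m$.
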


\begin{proof}
	The proof follows from the same chain-rule argument used in
	Lemma~\ref{lem:ME-MFG}, now with a general drift field $\beta$.
	Indeed,
	\[
	\frac{d}{dt}V(t,x,m(t))
	=
	\partial_tV(t,x,m(t))
	+
	\int_{\mathbb{T}^d}
	\frac{\delta V}{\delta m}(t,x,m(t),y)
	\,\partial_tm(t,y)\,dy.
	\]
	Using \eqref{eq:general-measure-flow} and integrating by parts on
	$\mathbb{T}^d$, we obtain
	\[
	\begin{aligned}
	\int_{\mathbb{T}^d}
	\frac{\delta V}{\delta m}
	\,\partial_tm\,dy
	={}&
	\int_{\mathbb{T}^d}
	\operatorname{div}_y
	\left(D_mV(t,x,m(t),y)\right)
	\,dm(t,y)
	\\
	&-
	\int_{\mathbb{T}^d}
	D_mV(t,x,m(t),y)
	\cdot
	\beta(t,y,m(t))
	\,dm(t,y).
	\end{aligned}
	\]
	Substituting this identity into the chain rule and using the definition
	of $\mathcal D_\beta$ yields
	\[
	\frac{d}{dt}V(t,x,m(t))
	=
	\mathcal D_\beta V(t,x,m(t)).
	\]

\end{proof}

We now compare two solutions of the master equation directly at the same
measure argument. This allows us to retain the full structure of the
master equation before restricting it to an associated characteristic
measure flow.

\begin{prop}\label{prop:master-difference-equation}
	Let $U_1$ and $U_2$ be two classical solutions of the master equation
	\eqref{MasterEquation}, corresponding to the same Hamiltonian $H$ and
	running cost $F$, with terminal conditions
	\[
	U_i(T,x,m)=G_i(x,m),
	\qquad i=1,2.
	\]
	Set
	\begin{equation*}\label{eq:master-difference-U}
	\overline U(t,x,m)
	:=
	U_1(t,x,m)-U_2(t,x,m).
	\end{equation*}
	For $i=1,2$, define
	\begin{equation*}\label{eq:master-beta-i}
	\beta_i(t,y,m)
	:=
	D_pH\left(y,D_yU_i(t,y,m)\right).
	\end{equation*}
	Define
	\begin{equation*}\label{eq:master-b-hat}
	\widehat b(t,x,m)
	:=
	\int_0^1
	D_pH\left(
	x,
	D_xU_2(t,x,m)
	+sD_x\overline U(t,x,m)
	\right)\,ds,
	\end{equation*}
	\begin{equation*}\label{eq:master-A-hat}
	\widehat A(t,y,m)
	:=
	\int_0^1
	D_{pp}^2H\left(
	y,
	D_yU_2(t,y,m)
	+sD_y\overline U(t,y,m)
	\right)\,ds.
	\end{equation*}
	Then $\overline U$ satisfies
	\begin{align}
	-\mathcal D_{\beta_1}\overline U(t,x,m)
	&-\Delta_x\overline U(t,x,m)
	+
	\widehat b(t,x,m)\cdot
	D_x\overline U(t,x,m)
	\nonumber\\
	&+
	\int_{\mathbb T^d}
	D_mU_2(t,x,m,y)
	\cdot
	\widehat A(t,y,m)
	D_y\overline U(t,y,m)
	\,dm(y)
	=0
	\label{eq:full-master-difference}
	\end{align}
	for
	$(t,x,m)\in
	(0,T)\times\mathbb T^d\times\mathcal P(\mathbb T^d)$,
	with terminal condition
	\begin{equation}\label{eq:full-master-difference-terminal}
	\overline U(T,x,m)
	=
	G_1(x,m)-G_2(x,m).
	\end{equation}
	Here $\mathcal D_{\beta_1}$ is the derivative operator introduced in
	Lemma~\ref{lem:characteristic-derivative}.
\end{prop}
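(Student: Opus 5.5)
We subtract the two master equations \eqref{MasterEquation} for $U_1$ and $U_2$, evaluated at the same point $(t,x,m)$, and regroup the terms so that every nonlinear difference becomes a linear expression in $\overline U$ with frozen coefficients. Since $H$ and $F$ are common to both equations, the running costs cancel. The linear terms give directly
\[
-\partial_t\overline U-\Delta_x\overline U-\int_{\mathbb T^d}\operatorname{div}_y\bigl(D_m\overline U(t,x,m,y)\bigr)\,dm(y).
\]
This leaves two groups to handle: the Hamiltonian difference and the nonlocal transport difference.

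For the Hamiltonian, the fundamental theorem of calculus along the segment $p_s=D_xU_2+sD_x\overline U$ gives
\[
H(x,D_xU_1)-H(x,D_xU_2)=\widehat b(t,x,m)\cdot D_x\overline U(t,x,m).
\]
For the nonlocal term we write
\[
D_mU_1\cdot\beta_1-D_mU_2\cdot\beta_2
= D_m\overline U\cdot\beta_1+D_mU_2\cdot(\beta_1-\beta_2),
\]
which is the key algebraic splitting. Applying the fundamental theorem of calculus again, now to $D_pH$ in $p$ at the point $y$, yields
\[
\beta_1-\beta_2=\widehat A(t,y,m)\,D_y\overline U(t,y,m).
\]
Integrating against $dm(y)$ turns the second piece into exactly the nonlocal coupling term in \eqref{eq:full-master-difference}.

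Next we collect $-\partial_t\overline U$, the divergence integral, and the term $+\int D_m\overline U\cdot\beta_1\,dm$. By the definition of $\mathcal D_{\beta}$ in Lemma~\ref{lem:characteristic-derivative} with $\beta=\beta_1$, these combine to $-\mathcal D_{\beta_1}\overline U(t,x,m)$. Note that this step uses only the definition of the operator, not the chain-rule identity, so no measure flow is needed here. The regularity hypotheses of that lemma, namely $C^1$ in $t$ and $\delta\overline U/\delta m$ twice differentiable in $y$, are inherited from the classical-solution regularity of $U_1$ and $U_2$. Linearity of the functional derivative gives $D_m\overline U=D_mU_1-D_mU_2$.

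The terminal condition \eqref{eq:full-master-difference-terminal} follows by subtracting the terminal conditions.

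The main point requiring care is sign bookkeeping. We will check that the $+\int D_mU\cdot D_pH\,dm$ term in \eqref{MasterEquation} produces exactly $+\int D_m\overline U\cdot\beta_1\,dm$, so that it is absorbed into $-\mathcal D_{\beta_1}$ with the correct sign. We will also check that the remainder appears with coefficient $D_mU_2$ rather than $D_mU_1$, which is dictated by the chosen splitting. Finally, the integrals defining $\widehat b$ and $\widehat A$ must be well defined and continuous; this requires $H\in C^2$ in $p$, which is assumed for classical solutions.
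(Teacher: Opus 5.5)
Your proposal is correct and follows essentially the same route as the paper's proof. You subtract the equations, rewrite the Hamiltonian difference as $\widehat b\cdot D_x\overline U$, use the splitting $D_mU_1\cdot\beta_1-D_mU_2\cdot\beta_2=D_m\overline U\cdot\beta_1+D_mU_2\cdot\widehat A\,D_y\overline U$, and recognize $-\mathcal D_{\beta_1}\overline U$ from the definition of the operator alone, with the signs matching those in the paper.
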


\begin{proof}
	Subtracting the equations satisfied by $U_1$ and $U_2$, we obtain
	\begin{align}
	0
	={}&
	-\partial_t\overline U
	-\Delta_x\overline U
	+
	H\left(x,D_xU_1\right)
	-
	H\left(x,D_xU_2\right)
	\nonumber\\
	&-
	\int_{\mathbb T^d}
	\operatorname{div}_y
	\left(
	D_m\overline U(t,x,m,y)
	\right)
	\,dm(y)
	\nonumber\\
	&+
	\int_{\mathbb T^d}
	\Big[
	D_mU_1(t,x,m,y)\cdot\beta_1(t,y,m)
	-
	D_mU_2(t,x,m,y)\cdot\beta_2(t,y,m)
	\Big]
	\,dm(y).
	\label{eq:master-difference-first}
	\end{align}
	
	We first consider the Hamiltonian term. By the mean value theorem, we have
	\begin{align}
	&
	H\left(x,D_xU_1(t,x,m)\right)
	-
	H\left(x,D_xU_2(t,x,m)\right)
	\nonumber\\
	={}&
	\int_0^1
	D_pH\left(
	x,
	D_xU_2(t,x,m)
	+sD_x\overline U(t,x,m)
	\right)
	\cdot
	D_x\overline U(t,x,m)
	\,ds
	\nonumber\\
	={}&
	\widehat b(t,x,m)
	\cdot
	D_x\overline U(t,x,m).
	\label{eq:H-difference}
	\end{align}
	
	Similarly, applying the mean value theorem to
	$p\mapsto D_pH(y,p)$, we obtain
    \begin{equation*}
	\begin{aligned}
	\beta_1(t,y,m)-\beta_2(t,y,m)
	={}&
	\int_0^1
	D_{pp}^2H\left(
	y,
	D_yU_2(t,y,m)
	+sD_y\overline U(t,y,m)
	\right)
	D_y\overline U(t,y,m)
	\,ds
	\\
	={}&
	\widehat A(t,y,m)
	D_y\overline U(t,y,m).
	\label{eq:beta-difference-master}
	\end{aligned}
	\end{equation*}
	
	For the last integral term in
	\eqref{eq:master-difference-first}, we write
	\begin{align}
	&
	D_mU_1(t,x,m,y)\cdot\beta_1(t,y,m)
	-
	D_mU_2(t,x,m,y)\cdot\beta_2(t,y,m)
	\nonumber\\
	={}&
	\left(
	D_mU_1(t,x,m,y)
	-
	D_mU_2(t,x,m,y)
	\right)
	\cdot\beta_1(t,y,m)
	\nonumber\\
	&+
	D_mU_2(t,x,m,y)
	\cdot
	\left(
	\beta_1(t,y,m)-\beta_2(t,y,m)
	\right)
	\nonumber\\
	={}&
	D_m\overline U(t,x,m,y)
	\cdot
	\beta_1(t,y,m)
	+
	D_mU_2(t,x,m,y)
	\cdot
	\widehat A(t,y,m)
	D_y\overline U(t,y,m).
	\label{eq:measure-term-difference}
	\end{align}
	
	Substituting
	\eqref{eq:H-difference} and
	\eqref{eq:measure-term-difference}
	into \eqref{eq:master-difference-first}, we obtain
	\begin{align}
	0
	={}&
	-\partial_t\overline U
	-\Delta_x\overline U
	+
	\widehat b(t,x,m)
	\cdot D_x\overline U
	\nonumber\\
	&-
	\int_{\mathbb T^d}
	\operatorname{div}_y
	\left(
	D_m\overline U(t,x,m,y)
	\right)
	\,dm(y)
	\nonumber\\
	&+
	\int_{\mathbb T^d}
	D_m\overline U(t,x,m,y)
	\cdot
	\beta_1(t,y,m)
	\,dm(y)
	\nonumber\\
	&+
	\int_{\mathbb T^d}
	D_mU_2(t,x,m,y)
	\cdot
	\widehat A(t,y,m)
	D_y\overline U(t,y,m)
	\,dm(y).
	\label{eq:master-difference-expanded}
	\end{align}
	
	By the definition of $\mathcal D_{\beta_1}$ in
	Lemma~\ref{lem:characteristic-derivative},
	\begin{align*}
	-\mathcal D_{\beta_1}\overline U(t,x,m)
	={}&
	-\partial_t\overline U(t,x,m)
	\\
	&-
	\int_{\mathbb T^d}
	\operatorname{div}_y
	\left(
	D_m\overline U(t,x,m,y)
	\right)
	\,dm(y)
	\\
	&+
	\int_{\mathbb T^d}
	D_m\overline U(t,x,m,y)
	\cdot
	\beta_1(t,y,m)
	\,dm(y).
	\end{align*}
	Therefore, \eqref{eq:master-difference-expanded} is precisely
	\eqref{eq:full-master-difference}.
	
	Finally,
	\[
	\overline U(T,x,m)
	=
	U_1(T,x,m)-U_2(T,x,m)
	=
	G_1(x,m)-G_2(x,m),
	\]
	which proves \eqref{eq:full-master-difference-terminal}.
	The proof is complete.
\end{proof}

Motivated by Proposition~\ref{prop:master-difference-equation}, we
introduce the linear operator
\begin{align}
\mathcal{P}V(t,x,m)
:={}&
-\mathcal{D}_{\beta_1}V(t,x,m)
-\Delta_xV(t,x,m)
+\widehat b(t,x,m)\cdot D_xV(t,x,m)
\nonumber\\
&+
\int_{\mathbb{T}^d}
D_mU_2(t,x,m,y)
\cdot
\widehat A(t,y,m)
D_yV(t,y,m)
\,dm(y).
\label{eq:full-master-operator}
\end{align}
With this notation, equation
\eqref{eq:full-master-difference} can be written as
\[
\mathcal{P}\overline U=0.
\]

\begin{thm}
	\label{thm:full-master-Carleman}
	Let $U_1$ and $U_2$ be as in
	Proposition~\ref{prop:master-difference-equation}, and let
	$Q_T:=(0,T)\times\mathbb{T}^d$.
	Let $m_1=m_1(t,x)$ be an associated flow of probability densities
	generated by $U_1$, with $m_1(0)=m_0$, satisfying
	\begin{equation}\label{eq:U1-measure-flow}
	\partial_t m_1
	-\Delta m_1
	-\operatorname{div}
	\left(
	m_1\beta_1(t,x,m_1(t))
	\right)
	=0
	\qquad \text{in } Q_T.
	\end{equation}
	
	Let $V=V(t,x,m)$ satisfy the regularity assumptions of
	Lemma~\ref{lem:characteristic-derivative}, and assume that
	\[
	v(t,x):=V(t,x,m_1(t))
	\]
	where
	$v\in
	L^2\left(0,T;H^2(\mathbb{T}^d)\right)
	\cap
	H^1\left(0,T;L^2(\mathbb{T}^d)\right).$	
Assume that there exists a constant $M>0$ such that
\begin{equation}\label{eq:master-Carleman-bounds}
\begin{aligned}
&
\sup_{m\in\mathcal{P}(\mathbb{T}^d)}
\left\|
\widehat b(\cdot,\cdot,m)
\right\|_{L^\infty(Q_T)}
+
\sup_{m\in\mathcal{P}(\mathbb{T}^d)}
\left\|
\widehat A(\cdot,\cdot,m)
\right\|_{L^\infty(Q_T)}
\\
&\quad+
\sup_{m\in\mathcal{P}(\mathbb{T}^d)}
\left\|
D_mU_2(\cdot,\cdot,m,\cdot)
\right\|_{L^\infty((0,T)\times\mathbb{T}^d\times\mathbb{T}^d)}
+
\|m_1\|_{L^\infty(Q_T)}
\leq M.
\end{aligned}
\end{equation}
	
	For any $a>0$ and $\nu>1$, define the weight function
	$\varphi_\lambda(t):=e^{\lambda(t+a)^\nu}$.
    Then there exist constants
	$\lambda_0=\lambda_0(T,a,\nu,d,M)>0$
	and
	$C=C(T,a,\nu,d,M)>0$
	such that, for all $\lambda\geq\lambda_0$,
	\begin{equation}
	\begin{aligned}
	&
	\lambda
	\int_{Q_T}
	e^{2\lambda(t+a)^\nu}
	\left|
	D_xV(t,x,m_1(t))
	\right|^2
	\,dxdt
	\\
	&\quad+
	\lambda^2
	\int_{Q_T}
	e^{2\lambda(t+a)^\nu}
	\left|
	V(t,x,m_1(t))
	\right|^2
	\,dxdt
	\\
	&\leq
	C
	\int_{Q_T}
	e^{2\lambda(t+a)^\nu}
	\left|
	\mathcal{P}V(t,x,m_1(t))
	\right|^2
	\,dxdt
	\\
	&\quad+
	Ce^{2\lambda(T+a)^\nu}
	\int_{\mathbb{T}^d}
	\left(
	\left|
	D_xV(T,x,m_1(T))
	\right|^2
	+
	\lambda
	\left|
	V(T,x,m_1(T))
	\right|^2
	\right)
	\,dx .
	\label{eq:full-master-Carleman}
	\end{aligned}
    \end{equation}
	
	In particular, taking $V=\overline U$ and using
	$\mathcal{P}\overline U=0$, we obtain
	\begin{align}
	&
	\lambda
	\int_{Q_T}
	e^{2\lambda(t+a)^\nu}
	\left|
	D_x\overline U(t,x,m_1(t))
	\right|^2
	\,dxdt
	\nonumber\\
	&\quad+
	\lambda^2
	\int_{Q_T}
	e^{2\lambda(t+a)^\nu}
	\left|
	\overline U(t,x,m_1(t))
	\right|^2
	\,dxdt
	\nonumber\\
	&\leq
	Ce^{2\lambda(T+a)^\nu}
	\int_{\mathbb{T}^d}
	\left(
	\left|
	D_x\overline U(T,x,m_1(T))
	\right|^2
	+
	\lambda
	\left|
	\overline U(T,x,m_1(T))
	\right|^2
	\right)
	\,dx .
	\label{eq:master-difference-Carleman}
	\end{align}
\end{thm}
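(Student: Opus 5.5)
The plan is to reduce the statement to a Carleman estimate for a linear backward parabolic equation on $Q_T$ with a nonlocal first-order perturbation. The key structural point is that the weight $\varphi_\lambda$ depends only on $t$. This lets both the local drift term and the nonlocal term be absorbed.

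\emph{Step 1: restriction to the flow.} Since $m_1$ solves \eqref{eq:U1-measure-flow}, which is \eqref{eq:general-measure-flow} with $\beta=\beta_1$, Lemma~\ref{lem:characteristic-derivative} gives
\[
\partial_t v(t,x)=\mathcal D_{\beta_1}V(t,x,m_1(t)).
\]
Hence, writing $f(t,x):=\mathcal PV(t,x,m_1(t))$, we get
\[
-\partial_t v-\Delta v+\widehat b(t,x,m_1(t))\cdot D_xv+\mathcal N v=f \quad\text{in } Q_T,
\]
where
\[
\mathcal Nv(t,x):=\int_{\mathbb T^d}D_mU_2(t,x,m_1(t),y)\cdot\widehat A(t,y,m_1(t))\,D_yv(t,y)\,m_1(t,y)\,dy .
\]
By \eqref{eq:master-Carleman-bounds} and Cauchy--Schwarz in $y$ (using $|\mathbb T^d|=1$),
\[
|\mathcal Nv(t,x)|^2\le M^6\int_{\mathbb T^d}|D_yv(t,y)|^2dy .
\]
Integrating in $x$ gives $\|\mathcal Nv(t)\|_{L^2}^2\le C(M)\|Dv(t)\|_{L^2}^2$ for each fixed $t$. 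Likewise $|\widehat b\cdot Dv|^2\le M^2|Dv|^2$. Both perturbations are therefore controlled, at each time, by $\|Dv(t)\|_{L^2}^2$.

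\emph{Step 2: Carleman estimate for the backward heat operator.} Set $\psi(t)=\lambda(t+a)^\nu$ and $w=e^{\psi}v$. Then $Dw=e^\psi Dv$ and
\[
e^{\psi}(-\partial_tv-\Delta v)=-\partial_tw+\psi'w-\Delta w=:A+B,
\]
with $A:=-\partial_tw$ and $B:=\psi'w-\Delta w$. I would use the lower bound
\[
\int_{Q_T}|A+B|^2\ge 2\int_{Q_T}AB+\int_{Q_T}|B|^2 .
\]
Integrating by parts on the torus, the square expands as
\[
\int_{Q_T}|B|^2=\int_{Q_T}\big(\psi'^2w^2+2\psi'|Dw|^2+|\Delta w|^2\big),
\]
and the cross term is
\[
2\int_{Q_T}AB=\int_{Q_T}\psi''w^2-\Big[\int_{\mathbb T^d}\big(\psi'w^2+|Dw|^2\big)dx\Big]_{t=0}^{t=T}.
\]

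Now $\psi'=\lambda\nu(t+a)^{\nu-1}\ge c\lambda$ and $\psi''=\lambda\nu(\nu-1)(t+a)^{\nu-2}>0$, both because $a>0$ and $\nu>1$. The $t=0$ contribution has a favourable sign and is dropped. The $t=T$ contribution is bounded by
\[
Ce^{2\lambda(T+a)^\nu}\int_{\mathbb T^d}\big(\lambda|v(T)|^2+|Dv(T)|^2\big)dx .
\]
This yields
\[
\lambda^2\!\int_{Q_T} e^{2\psi}v^2+\lambda\!\int_{Q_T} e^{2\psi}|Dv|^2\le C\!\int_{Q_T} e^{2\psi}|-\partial_tv-\Delta v|^2+\text{(terminal terms)}.
\]
The regularity $v\in L^2(H^2)\cap H^1(L^2)$ justifies these manipulations, after a density argument if needed.

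\emph{Step 3: absorption.} Write
\[
-\partial_tv-\Delta v=f-\widehat b\cdot Dv-\mathcal Nv .
\]
By Step 1, and because the weight is a function of $t$ alone, we can multiply the pointwise-in-time bound by $e^{2\psi(t)}$ and integrate:
\[
\int_{Q_T} e^{2\psi}|\widehat b\cdot Dv+\mathcal Nv|^2\le C(M)\int_{Q_T} e^{2\psi}|Dv|^2 .
\]
This term is absorbed into the left-hand side $\lambda\int e^{2\psi}|Dv|^2$ for $\lambda\ge\lambda_0(T,a,\nu,d,M)$. The result is \eqref{eq:full-master-Carleman}. Taking $V=\overline U$ and using $\mathcal P\overline U=0$ from Proposition~\ref{prop:master-difference-equation} gives \eqref{eq:master-difference-Carleman}.

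The main obstacle is the nonlocal term $\mathcal N$. A weight that also depended on $x$ would couple $e^{\psi(t,x)}$ with $D_yv(t,y)$ at different points and break the absorption. This is why I commit to the purely time-dependent weight, which loses spatial localization but is all the statement needs. A secondary point is checking that the sup bounds in \eqref{eq:master-Carleman-bounds} are only used along $m_1(t)$, which they are.
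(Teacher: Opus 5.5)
Your proposal is correct and follows essentially the same route as the paper: restrict $\mathcal PV$ to the flow via Lemma~\ref{lem:characteristic-derivative}, bound the drift and nonlocal terms pointwise in $t$ by $\|D_xv(t)\|_{L^2}^2$, prove the time-weighted Carleman estimate for $-\partial_t-\Delta$ by conjugating with $e^{\lambda(t+a)^\nu}$ and integrating by parts, and then absorb for $\lambda\ge\lambda_0(T,a,\nu,d,M)$. The only difference is cosmetic: you group $\psi'w$ with $-\Delta w$ and use $|A+B|^2\ge 2AB+|B|^2$, whereas the paper groups $-\partial_tw+\psi'w$ against $-\Delta w$ and expands fully; both give the same bulk terms $\psi'^2w^2$, $2\psi'|D_xw|^2$, $\psi''w^2$ and the same boundary terms at $t=0$ and $t=T$.
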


\begin{proof}
	Since $m_1$ satisfies \eqref{eq:U1-measure-flow} with
	$\beta=\beta_1$, Lemma~\ref{lem:characteristic-derivative} yields
	\begin{equation*}\label{eq:Dbeta-to-time}
	\mathcal{D}_{\beta_1}V(t,x,m_1(t))
	=
	\partial_tv(t,x).
	\end{equation*}
	Moreover,
	\[
	D_xV(t,x,m_1(t))=D_xv(t,x),
	\qquad
	\Delta_xV(t,x,m_1(t))=\Delta v(t,x).
	\]
	Therefore, evaluating \eqref{eq:full-master-operator} at
	$m=m_1(t)$, we obtain
	\begin{equation}\label{eq:master-operator-along-flow}
	\mathcal{P}V(t,x,m_1(t))
	=
	-\partial_tv(t,x)
	-\Delta v(t,x)
	+
	\widehat b_1(t,x)\cdot D_xv(t,x)
	+
	\mathcal{N}_1v(t,x),
	\end{equation}
	where
	\[
	\widehat b_1(t,x)
	:=
	\widehat b(t,x,m_1(t))
	\]
	and
	\begin{equation*}\label{eq:N1-definition}
	\mathcal{N}_1v(t,x)
	:=
	\int_{\mathbb{T}^d}
	D_mU_2(t,x,m_1(t),y)
	\cdot
	\widehat A(t,y,m_1(t))
	D_yv(t,y)
	\,dm_1(t,y).
	\end{equation*}
	
	We first estimate the nonlocal term. By the Cauchy--Schwarz inequality
	with respect to the probability measure $m_1(t)$, we have that
	\begin{align*}
	|\mathcal{N}_1v(t,x)|^2
	\leq{}&
	\left[
	\int_{\mathbb{T}^d}
	\left|
	D_mU_2(t,x,m_1(t),y)
	\widehat A(t,y,m_1(t))
	\right|^2
	\,dm_1(t,y)
	\right]
	\nonumber\\
	&\times
	\left[
	\int_{\mathbb{T}^d}
	|D_yv(t,y)|^2
	\,dm_1(t,y)
	\right].
	\label{eq:N1-CS}
	\end{align*}
	By \eqref{eq:master-Carleman-bounds},
	\[
	|\mathcal{N}_1v(t,x)|^2
	\leq
	C_M
	\int_{\mathbb{T}^d}
	|D_yv(t,y)|^2
	\,dm_1(t,y).
	\]
	Since $m_1$ has a bounded density,
	\[
	\int_{\mathbb{T}^d}
	|D_yv(t,y)|^2
	\,dm_1(t,y)
	\leq
	C_M
	\int_{\mathbb{T}^d}
	|D_yv(t,y)|^2\,dy.
	\]
	Consequently,
	\begin{equation}\label{eq:N1-L2-bound}
	\int_{\mathbb{T}^d}
	|\mathcal{N}_1v(t,x)|^2\,dx
	\leq
	C_M
	\int_{\mathbb{T}^d}
	|D_xv(t,x)|^2\,dx.
	\end{equation}
	
We next derive a weighted estimate for the backward parabolic
operator. Set
\[
w(t,x)
=
e^{\lambda(t+a)^\nu}v(t,x).
\]
Then
\begin{equation*}\label{eq:conjugated-master}
e^{\lambda(t+a)^\nu}
\left(
-\partial_tv-\Delta v
\right)
=
-\partial_tw-\Delta w
+
\lambda\nu(t+a)^{\nu-1}w.
\end{equation*}
Hence,
\begin{align}
&
\int_{Q_T}
e^{2\lambda(t+a)^\nu}
|-\partial_tv-\Delta v|^2
\,dxdt
\nonumber\\
&=
\int_{Q_T}
\left|
-\partial_tw-\Delta w
+
\lambda\nu(t+a)^{\nu-1}w
\right|^2
\,dxdt .
\label{eq:master-basic-expansion}
\end{align}
Writing
\[
-\partial_tw-\Delta w+\lambda\nu(t+a)^{\nu-1}w
=
\left(
-\partial_tw+\lambda\nu(t+a)^{\nu-1}w
\right)
-\Delta w,
\]
we obtain
\begin{align}
&
\int_{Q_T}
\left|
-\partial_tw-\Delta w
+
\lambda\nu(t+a)^{\nu-1}w
\right|^2
\,dxdt
\nonumber\\
={}&
\int_{Q_T}
\left|
-\partial_tw+\lambda\nu(t+a)^{\nu-1}w
\right|^2
\,dxdt
+
\int_{Q_T}
|\Delta w|^2
\,dxdt
\nonumber\\
&-
2
\int_{Q_T}
\left(
-\partial_tw+\lambda\nu(t+a)^{\nu-1}w
\right)
\Delta w
\,dxdt .
\label{eq:master-Carleman-split}
\end{align}

For the first term on the right-hand side of
\eqref{eq:master-Carleman-split}, we can derive that
\begin{align*}
&
\int_{Q_T}
\left|
-\partial_tw+\lambda\nu(t+a)^{\nu-1}w
\right|^2
\,dxdt
\nonumber\\
={}&
\int_{Q_T}
|\partial_tw|^2\,dxdt
+
\lambda^2\nu^2
\int_{Q_T}
(t+a)^{2\nu-2}|w|^2\,dxdt
\nonumber\\
&-
2\lambda\nu
\int_{Q_T}
(t+a)^{\nu-1}w\partial_tw
\,dxdt .
\end{align*}
Since
\[
2w\partial_tw=\partial_t(w^2),
\]
integration by parts in time yields
\begin{align*}
&
-2\lambda\nu
\int_{Q_T}
(t+a)^{\nu-1}w\partial_tw
\,dxdt
\nonumber\\
={}&
\lambda\nu(\nu-1)
\int_{Q_T}
(t+a)^{\nu-2}|w|^2\,dxdt
\nonumber\\
&-
\lambda\nu
\left[
(t+a)^{\nu-1}
\int_{\mathbb{T}^d}
|w(t,x)|^2\,dx
\right]_{t=0}^{t=T}.
\end{align*}
Therefore,
\begin{align}
&
\int_{Q_T}
\left|
-\partial_tw+\lambda\nu(t+a)^{\nu-1}w
\right|^2
\,dxdt
\nonumber\\
\geq{}&
\lambda^2\nu^2
\int_{Q_T}
(t+a)^{2\nu-2}|w|^2\,dxdt
\nonumber\\
&-
\lambda\nu
\left[
(t+a)^{\nu-1}
\int_{\mathbb{T}^d}
|w(t,x)|^2\,dx
\right]_{t=0}^{t=T}.
\label{eq:master-Carleman-zero-order}
\end{align}

For the cross term in
\eqref{eq:master-Carleman-split}, integration by parts on
$\mathbb{T}^d$ gives
\begin{align}
&
-2
\int_{Q_T}
\left(
-\partial_tw+\lambda\nu(t+a)^{\nu-1}w
\right)
\Delta w
\,dxdt
\nonumber\\
={}&
2
\int_{Q_T}
\partial_tw\,\Delta w
\,dxdt
-
2\lambda\nu
\int_{Q_T}
(t+a)^{\nu-1}w\Delta w
\,dxdt
\nonumber\\
={}&
-
\left[
\int_{\mathbb{T}^d}
|D_xw(t,x)|^2\,dx
\right]_{t=0}^{t=T}
+
2\lambda\nu
\int_{Q_T}
(t+a)^{\nu-1}|D_xw|^2
\,dxdt .
\label{eq:master-Carleman-gradient}
\end{align}

Combining
\eqref{eq:master-basic-expansion},
\eqref{eq:master-Carleman-split},
\eqref{eq:master-Carleman-zero-order},
and \eqref{eq:master-Carleman-gradient}, we can obtain that
\begin{align}
&
2\lambda\nu
\int_{Q_T}
(t+a)^{\nu-1}|D_xw|^2
\,dxdt
+
\lambda^2\nu^2
\int_{Q_T}
(t+a)^{2\nu-2}|w|^2
\,dxdt
\nonumber\\
&\leq
\int_{Q_T}
e^{2\lambda(t+a)^\nu}
|-\partial_tv-\Delta v|^2
\,dxdt
\nonumber\\
&\quad+
Ce^{2\lambda(T+a)^\nu}
\int_{\mathbb{T}^d}
\left(
|D_xv(T,x)|^2
+
\lambda|v(T,x)|^2
\right)
\,dx .
\label{eq:master-Carleman-preliminary}
\end{align}

Since $a>0$ and $\nu>1$,
\[
(t+a)^{\nu-1}\geq a^{\nu-1},
\qquad
(t+a)^{2\nu-2}\geq a^{2\nu-2},
\qquad t\in[0,T].
\]
Moreover,
\[
w=e^{\lambda(t+a)^\nu}v,
\qquad
D_xw=e^{\lambda(t+a)^\nu}D_xv.
\]
Hence \eqref{eq:master-Carleman-preliminary} implies
\begin{equation}
\begin{aligned}
&
\lambda
\int_{Q_T}
e^{2\lambda(t+a)^\nu}
|D_xv|^2
\,dxdt
+
\lambda^2
\int_{Q_T}
e^{2\lambda(t+a)^\nu}
|v|^2
\,dxdt
\\
&\leq
C
\int_{Q_T}
e^{2\lambda(t+a)^\nu}
|-\partial_tv-\Delta v|^2
\,dxdt
\\
&\quad+
Ce^{2\lambda(T+a)^\nu}
\int_{\mathbb{T}^d}
\left(
|D_xv(T,x)|^2
+
\lambda|v(T,x)|^2
\right)
\,dx .
\label{eq:basic-master-Carleman}
\end{aligned}
\end{equation}

By \eqref{eq:master-operator-along-flow},
\[
-\partial_tv-\Delta v
=
\mathcal{P}V(t,x,m_1(t))
-
\widehat b_1(t,x)\cdot D_xv
-
\mathcal{N}_1v.
\]
Using \eqref{eq:master-Carleman-bounds} and
\eqref{eq:N1-L2-bound}, we obtain
\begin{equation}
\begin{aligned}
&
\int_{Q_T}
e^{2\lambda(t+a)^\nu}
|-\partial_tv-\Delta v|^2
\,dxdt
\\
&\leq
C
\int_{Q_T}
e^{2\lambda(t+a)^\nu}
\left|
\mathcal{P}V(t,x,m_1(t))
\right|^2
\,dxdt
\\
&\quad+
C
\int_{Q_T}
e^{2\lambda(t+a)^\nu}
|D_xv|^2
\,dxdt .
\label{eq:master-lower-order-bound}
\end{aligned}
\end{equation}

Substituting \eqref{eq:master-lower-order-bound} into
\eqref{eq:basic-master-Carleman}, we obtain
\begin{equation}\label{eq:before-master-absorption}
\begin{aligned}
&
\lambda
\int_{Q_T}
e^{2\lambda(t+a)^\nu}|D_xv|^2\,dxdt
+
\lambda^2
\int_{Q_T}
e^{2\lambda(t+a)^\nu}|v|^2\,dxdt
\\
&\leq
C_1
\int_{Q_T}
e^{2\lambda(t+a)^\nu}
\left|
\mathcal{P}V(t,x,m_1(t))
\right|^2
\,dxdt
\\
&\quad+
C_1C_2
\int_{Q_T}
e^{2\lambda(t+a)^\nu}|D_xv|^2\,dxdt
\\
&\quad+
C_1e^{2\lambda(T+a)^\nu}
\int_{\mathbb{T}^d}
\left(
|D_xv(T,x)|^2
+
\lambda|v(T,x)|^2
\right)\,dx,
\end{aligned}
\end{equation}
where $C_1=C_1(T,a,\nu,d)>0$ and $C_2=C_2(M)>0$ are
independent of $\lambda$.

Set
\begin{equation*}
\lambda_0
:=
\max\left\{1,\,2C_1C_2\right\}.
\end{equation*}
Then, for every $\lambda\geq\lambda_0$,
\[
\lambda-C_1C_2\geq\frac{\lambda}{2}.
\]
Hence the second term on the right-hand side of
\eqref{eq:before-master-absorption} can be moved to the left-hand side,
and we have that
\begin{align*}
&
\frac{\lambda}{2}
\int_{Q_T}
e^{2\lambda(t+a)^\nu}|D_xv|^2\,dxdt
+
\lambda^2
\int_{Q_T}
e^{2\lambda(t+a)^\nu}|v|^2\,dxdt
\nonumber\\
&\leq
C_1
\int_{Q_T}
e^{2\lambda(t+a)^\nu}
\left|
\mathcal{P}V(t,x,m_1(t))
\right|^2
\,dxdt
\nonumber\\
&\quad+
C_1e^{2\lambda(T+a)^\nu}
\int_{\mathbb{T}^d}
\left(
|D_xv(T,x)|^2
+
\lambda|v(T,x)|^2
\right)\,dx.
\end{align*}
Therefore, we have that \eqref{eq:full-master-Carleman}.

Finally, taking $V=\overline U$ and using
\[
\mathcal{P}\overline U=0,
\]
we obtain \eqref{eq:master-difference-Carleman}.
This completes the proof.
\end{proof}

\begin{rem}
	The assumptions in \eqref{eq:master-Carleman-bounds} are naturally satisfied for the standard quadratic Hamiltonian
	$H(p)=\frac12 |p|^2$.
	Indeed, in this case $D_pH(p)=p$ and $D_{pp}^2H(p)=I$, so that
	$\widehat A=I,
	\widehat b=\frac12\left(D_xU_1+D_xU_2\right)$.
	Hence, if the spatial gradients of the two classical master fields are
	uniformly bounded, the bounds on $\widehat b$ and $\widehat A$ required
	in \eqref{eq:master-Carleman-bounds} follow immediately. The boundedness assumption on $D_mU$ and the characteristic density $m$ are independent of this particular
	choice of Hamiltonian and are retained as essential a priori
	regularity assumptions.
\end{rem}

\begin{rem}\label{rem_H}
	In fact, the above stability result can also be extended to the case of two master equations with different Hamiltonians. Suppose that $U_i$ corresponds
	to $H_i$, $i=1,2$, and set $\overline{H}:=H_1-H_2$. By linearizing the terms
	involving $H_1$ around $U_2$, the equation for
	$\overline U:=U_1-U_2$ has the same principal and nonlocal structure as
	\eqref{eq:full-master-difference}, with an additional source term
	\[
	R_{\overline{H}}(t,x,m)
	=
	\overline{H}\bigl(x,D_xU_2\bigr)
	+
	\int_{\mathbb T^d}
	D_mU_2(t,x,m,y)\cdot
	D_p\overline{H}\bigl(y,D_yU_2(t,y,m)\bigr)\,dm(y).
	\]
	Thus, perturbations of the Hamiltonian can be incorporated naturally into
	the right-hand side of the Carleman estimate. This gives a master-field
	stability estimate of a similar type to that in Mou, Zhang and Zhou
	\cite{MouZhangZhou2025}, with perturbations in both the Hamiltonian and
	terminal data. The present argument relies on a priori boundedness
	assumptions on the relevant coefficients and solution, without
	requiring a separate uniqueness assumption for the associated
	Hamilton--Jacobi equation.
\end{rem}

\begin{cor}
	Let $U_i$, $i=1,2$, be two classical solutions of the master equations
	corresponding to Hamiltonians $H_i$ and terminal costs $G_i$, respectively,
	with the same running cost $F$. Set
	\[
	\overline U:=U_1-U_2,\qquad
	\overline{H}:=H_1-H_2,\qquad
	\overline{G}:=G_1-G_2.
	\]
	Let $m_1$ be the reference characteristic measure flow generated by $U_1$.
	Assume that the coefficient bounds required in Theorem \ref{thm:full-master-Carleman} hold, and for some $R>0$, 
	\begin{equation}\label{asumption_bounded_DxU}
	\sup_{m\in\mathcal P(\mathbb T^d)}
	\|D_xU_i(\cdot,\cdot,m)\|_{L^\infty(Q_T)}
	\le R, \quad i=1,2.
	\end{equation}
	Define
	\[
	\|\overline{H}\|_{C_p^1(R)}
	:=
	\sup_{\substack{x\in\mathbb T^d, |p|\le R}}
	\left(
	|\overline{H}(x,p)|
	+
	|D_p\overline{H}(x,p)|
	\right).
	\]
	Then there exists a constant $C>0$, depending only on the parameters in
	Theorem \ref{thm:full-master-Carleman} and the corresponding a priori bounds, such that
	\[
	\begin{aligned}
	\|\overline U(\cdot,\cdot,m_1(\cdot))\|_{L^2(0,T;H^1(\mathbb T^d))}^2
    \le
	C\left[
	\|\overline{G}(\cdot,m_1(T))\|_{H^1(\mathbb T^d)}^2
	+
	\|\overline{H}\|_{C_p^1(R)}^2
	\right].
	\end{aligned}
	\]
\end{cor}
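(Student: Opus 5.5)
The plan is to redo the derivation of Proposition~\ref{prop:master-difference-equation} with two Hamiltonians, show that $\overline U$ satisfies $\mathcal P\overline U=R_{\overline H}$ as described in Remark~\ref{rem_H}, and then apply the Carleman estimate \eqref{eq:full-master-Carleman} of Theorem~\ref{thm:full-master-Carleman} with $V=\overline U$.

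\emph{Step 1: the difference equation.} Subtract the master equations for $U_1$ (with $H_1$) and $U_2$ (with $H_2$). Split the Hamiltonian term as
\[
H_1(x,D_xU_1)-H_2(x,D_xU_2)=\bigl[H_1(x,D_xU_1)-H_1(x,D_xU_2)\bigr]+\overline H(x,D_xU_2).
\]
The bracket is handled by the mean value theorem and equals $\widehat b\cdot D_x\overline U$, where $\widehat b$ is now built from $D_pH_1$. Treat the drift terms the same way, with $\beta_i=D_pH_i(y,D_yU_i)$:
\[
\beta_1-\beta_2=\widehat A\,D_y\overline U+D_p\overline H(y,D_yU_2),
\]
where $\widehat A$ is built from $D^2_{pp}H_1$. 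Inserting these identities into the computation of \eqref{eq:measure-term-difference} gives
\[
\mathcal P\overline U=-R_{\overline H},
\]
with $R_{\overline H}$ as in Remark~\ref{rem_H}. The sign is irrelevant, since only $|\mathcal P\overline U|^2$ enters the estimate.

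\emph{Step 2: bounding the source.} By \eqref{asumption_bounded_DxU}, $|D_xU_2|\le R$, so $|\overline H(x,D_xU_2)|\le\|\overline H\|_{C^1_p(R)}$. For the nonlocal part, use the bound on $D_mU_2$ from \eqref{eq:master-Carleman-bounds}; since $m_1(t)$ is a probability measure,
\[
\Bigl|\int D_mU_2\cdot D_p\overline H(y,D_yU_2)\,dm_1(t,y)\Bigr|\le M\|\overline H\|_{C^1_p(R)}.
\]
Hence
\[
\int_{Q_T}e^{2\lambda(t+a)^\nu}|\mathcal P\overline U(t,x,m_1(t))|^2\,dx\,dt\le C(M)\,e^{2\lambda(T+a)^\nu}\,T\,\|\overline H\|_{C^1_p(R)}^2,
\]
using that the weight is increasing in $t$.

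\emph{Step 3: removing the weight.} Fix $\lambda=\lambda_0$ in \eqref{eq:full-master-Carleman}. On the left, bound the weight from below by $e^{2\lambda_0a^\nu}$; on the right, the weight is at most $e^{2\lambda_0(T+a)^\nu}$. Dividing through gives
\[
\|D_x\overline U(\cdot,\cdot,m_1(\cdot))\|_{L^2(Q_T)}^2+\|\overline U(\cdot,\cdot,m_1(\cdot))\|_{L^2(Q_T)}^2\le C\bigl[\|\overline G(\cdot,m_1(T))\|_{H^1(\mathbb T^d)}^2+\|\overline H\|_{C^1_p(R)}^2\bigr].
\]
Here we use $\overline U(T,\cdot,m_1(T))=\overline G(\cdot,m_1(T))$, and the constant is proportional to $e^{2\lambda_0[(T+a)^\nu-a^\nu]}$.

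\emph{Main obstacle.} The delicate point is checking that the Carleman hypotheses survive the change of Hamiltonian.
\begin{itemize}
\item The coefficients $\widehat b$ and $\widehat A$ must now be defined via $H_1$, and their $L^\infty$ bounds follow from \eqref{asumption_bounded_DxU} together with the local boundedness of $D_pH_1$ and $D^2_{pp}H_1$ on $\{|p|\le R\}$.
\item The flow $m_1$ must be driven by $\beta_1=D_pH_1(\cdot,D_xU_1)$, so that Lemma~\ref{lem:characteristic-derivative} converts $\mathcal D_{\beta_1}$ into $\partial_t$.
\end{itemize}
Once these are verified, the absorption argument in Theorem~\ref{thm:full-master-Carleman} goes through unchanged with the source term added on the right-hand side.
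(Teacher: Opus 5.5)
Your proposal is correct and follows essentially the same route as the paper. You derive $\mathcal P\overline U=-R_{\overline H}$ with the coefficients $\widehat b,\widehat A,\beta_1$ built from $H_1$. You then bound the source pointwise by $(1+M)\|\overline H\|_{C_p^1(R)}$, using $|D_xU_2|\le R$ and the bound on $D_mU_2$. Finally, you apply the general estimate of Theorem~\ref{thm:full-master-Carleman} to $V=\overline U$ at a fixed $\lambda=\lambda_0$, with $\overline U(T,\cdot,m_1(T))=\overline G(\cdot,m_1(T))$.

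Your explicit removal of the weights fills in details the paper leaves implicit. So does your remark that $D_xU_2+sD_x\overline U=(1-s)D_xU_2+sD_xU_1$ stays in the ball of radius $R$, which makes the bounds on $\widehat b,\widehat A$ follow from local bounds on $D_pH_1$ and $D^2_{pp}H_1$.
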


\begin{proof}
	As mentioned in the remark \ref{rem_H}, $\overline U$ satisfies
	\[
	\mathcal{P}\overline U=-R_H,
	\]
	where $\mathcal{P}$ is the operator defined as in \eqref{eq:full-master-operator}, with the corresponding
	coefficients constructed from $H_1$. By the boundedness assumption \eqref{eq:master-Carleman-bounds} and \eqref{asumption_bounded_DxU}, we have
	\[
	\|R_H(\cdot,\cdot,m_1(\cdot))\|_{L^2(Q_T)}
	\le C\|\overline H\|_{C_p^1(R)}.
	\]
	Applying Theorem \ref{thm:full-master-Carleman} to $V=\overline U$ and using
	\[
	\overline U(T,x,m_1(T))
	=
	\overline G(x,m_1(T)),
	\]
	yields the desired estimate.
\end{proof}

\section{Lipschitz Stability of the Functional Measure Derivative}\label{section4}

\begin{thm}\label{carleman-estimate-m}
	Let $Q=(0,T)\times \mathbb{T}^{d}\times \mathbb{T}^{d}$,
	and $K\in 
	L^2(0,T;H^2(\mathbb T^d\times\mathbb T^d))
	\cap
	H^1(0,T;L^2(\mathbb T^d\times\mathbb T^d))$.
	For any $a>0$ and $\nu>1$, define the weight function
	$\varphi(t)=e^{\lambda (t+a)^{\nu}}$. Then there exists a constant 
	$C=C(T,a,\nu,d)>0$, such that for all $\lambda\geqslant  1$, we have
	\[
	\begin{aligned}
	&\lambda
	\int_{Q}
	e^{2\lambda (t+a)^{\nu}}
	\Big(
	|D_xK|^{2}
	+
	|D_yK|^{2}
	\Big)\,dx\,dy\,dt
	\\
	&\quad
	+
	\lambda^{2}
	\int_{Q}
	e^{2\lambda (t+a)^{\nu}}
	|K|^{2}\,dx\,dy\,dt
	\\
	&\leq
	C
	\int_{Q}
	e^{2\lambda (t+a)^{\nu}}
	\left|
	-K_t-\Delta_xK-\Delta_yK
	\right|^{2}
	\,dx\,dy\,dt
	\\
	&\quad
	+
	C
	e^{2\lambda (T+a)^{\nu}}
	\int_{\mathbb{T}^{d}\times\mathbb{T}^{d}}
	\Big(
	|D_xK(T,x,y)|^{2}
	+
	|D_yK(T,x,y)|^{2}
	+
	\lambda |K(T,x,y)|^{2}
	\Big)
	\,dx\,dy .
	\end{aligned}
	\]
\end{thm}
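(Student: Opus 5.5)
The plan is to repeat the weighted-energy argument from the proof of Theorem~\ref{thm:full-master-Carleman}, now on the product torus $\mathbb T^d\times\mathbb T^d=\mathbb T^{2d}$ with the Laplacian $\Delta_{x,y}:=\Delta_x+\Delta_y$. Since the statement contains no lower-order or nonlocal terms, no absorption step is needed, and $\lambda\ge1$ will suffice.

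First set $w:=e^{\lambda(t+a)^\nu}K$, so that
\[
e^{\lambda(t+a)^\nu}\left(-K_t-\Delta_xK-\Delta_yK\right)=-w_t-\Delta_{x,y}w+\lambda\nu(t+a)^{\nu-1}w .
\]
Next, split the right-hand side as $\bigl(-w_t+\lambda\nu(t+a)^{\nu-1}w\bigr)-\Delta_{x,y}w$ and expand the $L^2(Q)$ norm squared. This produces three pieces: the square of the first group, $\|\Delta_{x,y}w\|^2\ge0$ (which I discard), and a cross term.

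For the first group, drop $|w_t|^2$. Then integrate $-2\lambda\nu(t+a)^{\nu-1}ww_t$ by parts in $t$. This gives
\[
\lambda^2\nu^2\int_Q(t+a)^{2\nu-2}w^2+\lambda\nu(\nu-1)\int_Q(t+a)^{\nu-2}w^2\;-\;\lambda\nu\Bigl[(t+a)^{\nu-1}\int w^2\Bigr]_0^T .
\]
The first two terms are nonnegative. In the boundary term, the $t=0$ contribution has a favorable sign. The $t=T$ contribution is bounded by $C\lambda e^{2\lambda(T+a)^\nu}\int|K(T)|^2$.

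For the cross term, integrate by parts in space on $\mathbb T^{2d}$; there are no boundary terms because of periodicity. The identity $2\int w_t\Delta_{x,y}w=-\frac{d}{dt}\int|D_{x,y}w|^2$ yields
\[
-\Bigl[\int(|D_xw|^2+|D_yw|^2)\Bigr]_0^T+2\lambda\nu\int_Q(t+a)^{\nu-1}\bigl(|D_xw|^2+|D_yw|^2\bigr).
\]
Again the $t=0$ boundary term has the good sign. The $t=T$ term is $e^{2\lambda(T+a)^\nu}\int(|D_xK(T)|^2+|D_yK(T)|^2)$.

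Now collect these pieces, and use $(t+a)^{\nu-1}\ge a^{\nu-1}$ and $(t+a)^{2\nu-2}\ge a^{2\nu-2}$. Together with $D_xw=e^{\lambda(t+a)^\nu}D_xK$ (and likewise for $D_y$), this gives the claimed inequality with $C$ depending on $a$, $\nu$ and $T$ (the $T$-dependence enters through $(T+a)^{\nu-1}$ in the terminal boundary term). The computation is justified for smooth $K$ and then extended by density in $L^2(0,T;H^2)\cap H^1(0,T;L^2)$; this class also guarantees that the traces at $t=0,T$ lie in $H^1$.

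The main point needing care is the sign bookkeeping of the boundary terms. The weight is increasing in $t$ and the operator is backward parabolic, so the $t=0$ contributions must come out nonnegative on the left. This has to be checked so that no initial-time data appear, leaving only terminal data on the right.
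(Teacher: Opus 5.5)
Your proposal is correct and follows essentially the same argument as the paper: conjugate by $e^{\lambda(t+a)^\nu}$, split off $-\Delta_xw-\Delta_yw$, integrate by parts in $t$ and on $\mathbb{T}^{2d}$, discard $|w_t|^2$, $|\Delta_xw+\Delta_yw|^2$ and the favorably signed $t=0$ boundary terms, and finally use $a^{\nu-1}\le(t+a)^{\nu-1}\le(T+a)^{\nu-1}$. Your explicit sign check at $t=0$ and the density/trace justification in $L^2(0,T;H^2)\cap H^1(0,T;L^2)$ are correct, and they supply details the paper leaves implicit.
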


\begin{proof}
	Let
	\[
	w=e^{\lambda (t+a)^\nu}K .
	\]
	Then
	\begin{equation*}\label{conjugate}
	e^{\lambda (t+a)^\nu}
	\left(
	-K_t-\Delta_xK-\Delta_yK
	\right)
	=
	-w_t-\Delta_xw-\Delta_yw
	+\lambda\nu(t+a)^{\nu-1}w .
	\end{equation*}
	
	It follows that
	\[
	\begin{aligned}
	&\int_Q
	e^{2\lambda (t+a)^\nu}
	\left|
	-K_t-\Delta_xK-\Delta_yK
	\right|^2
	\\
	&=
	\int_Q
	\left|
	-w_t-\Delta_xw-\Delta_yw
	+\lambda\nu(t+a)^{\nu-1}w
	\right|^2 .
	\end{aligned}
	\]
	
	We write
	\[
	-w_t-\Delta_xw-\Delta_yw
	+\lambda\nu(t+a)^{\nu-1}w
	=
	\left(
	-w_t+\lambda\nu(t+a)^{\nu-1}w
	\right)
	-
	\left(
	\Delta_xw+\Delta_yw
	\right).
	\]
	Hence
	\begin{equation}\label{expand-square}
	\begin{aligned}
	&\int_Q
	\left|
	-w_t-\Delta_xw-\Delta_yw
	+\lambda\nu(t+a)^{\nu-1}w
	\right|^2
	\\
	&=
	\int_Q
	\left|
	-w_t+\lambda\nu(t+a)^{\nu-1}w
	\right|^2+
	\int_Q
	|\Delta_xw+\Delta_yw|^2
	\\
	&\quad
	-
	2\int_Q
	\left(
	-w_t+\lambda\nu(t+a)^{\nu-1}w
	\right)
	\left(
	\Delta_xw+\Delta_yw
	\right).
	\end{aligned}
	\end{equation}
	
	We first estimate the first term on the right-hand side of
	\eqref{expand-square}. Direct expansion gives
	\[
	\begin{aligned}
	&\int_Q
	\left|
	-w_t+\lambda\nu(t+a)^{\nu-1}w
	\right|^2
	\\
	&=
	\int_Q |w_t|^2
	-
	2\lambda\nu
	\int_Q
	(t+a)^{\nu-1}ww_t
	+
	\lambda^2\nu^2
	\int_Q
	(t+a)^{2\nu-2}w^2 .
	\end{aligned}
	\]
	Since $-2ww_t=-\partial_t(w^2)$,
	we have
	\[
	\begin{aligned}
	&-
	2\lambda\nu
	\int_Q
	(t+a)^{\nu-1}ww_t
	\\
	&=
	\lambda\nu(\nu-1)
	\int_Q
	(t+a)^{\nu-2}w^2
	-
	\lambda\nu
	\left[
	(t+a)^{\nu-1}
	\int_{\mathbb T^{2d}} w^2\,dx\,dy
	\right]_0^T .
	\end{aligned}
	\]
	Therefore,
	\begin{equation}\label{time-estimate}
	\begin{aligned}
	&\int_Q
	\left|
	-w_t+\lambda\nu(t+a)^{\nu-1}w
	\right|^2
	\\
	&\ge
	\lambda^2\nu^2
	\int_Q
	(t+a)^{2\nu-2}w^2
	-
	\lambda\nu
	\left[
	(t+a)^{\nu-1}
	\int_{\mathbb T^{2d}} w^2\,dx\,dy
	\right]_0^T .
	\end{aligned}
	\end{equation}
	
	Next we compute the cross term in \eqref{expand-square}. We have
	\[
	\begin{aligned}
	&-2\int_Q
	\left(
	-w_t+\lambda\nu(t+a)^{\nu-1}w
	\right)
	\left(
	\Delta_xw+\Delta_yw
	\right)
	\\
	&=
	2\int_Q w_t(\Delta_xw+\Delta_yw)
	-
	2\lambda\nu
	\int_Q
	(t+a)^{\nu-1}w(\Delta_xw+\Delta_yw).
	\end{aligned}
	\]
	
	For the first term above, integration by parts in $x$ and $y$ gives
	\[
	\begin{aligned}
	2\int_Q w_t(\Delta_xw+\Delta_yw)
	&=
	-2\int_Q
	\left(
	D_xw_t\cdot D_xw
	+
	D_yw_t\cdot D_yw
	\right)
	\\
	&=
	-\int_Q
	\partial_t
	\left(
	|D_xw|^2+|D_yw|^2
	\right)
	\\
	&=
	-
	\left[
	\int_{\mathbb T^{2d}}
	\left(
	|D_xw|^2+|D_yw|^2
	\right)\,dx\,dy
	\right]_0^T .
	\end{aligned}
	\]
	For the second term in \eqref{expand-square}, again using integration by parts in $x$ and $y$, we obtain
	\[
	\begin{aligned}
	&-
	2\lambda\nu
	\int_Q
	(t+a)^{\nu-1}w(\Delta_xw+\Delta_yw)
	\\
	&=
	2\lambda\nu
	\int_Q
	(t+a)^{\nu-1}
	\left(
	|D_xw|^2+|D_yw|^2
	\right).
	\end{aligned}
	\]
	Thus,
	\begin{equation}\label{cross-estimate}
	\begin{aligned}
	&-2\int_Q
	\left(
	-w_t+\lambda\nu(t+a)^{\nu-1}w
	\right)
	\left(
	\Delta_xw+\Delta_yw
	\right)
	\\
	&=
	2\lambda\nu
	\int_Q
	(t+a)^{\nu-1}
	\left(
	|D_xw|^2+|D_yw|^2
	\right)
	-
	\left[
	\int_{\mathbb T^{2d}}
	\left(
	|D_xw|^2+|D_yw|^2
	\right)\,dx\,dy
	\right]_0^T .
	\end{aligned}
	\end{equation}
	
	Combining \eqref{expand-square}, \eqref{time-estimate} and
	\eqref{cross-estimate}, 
	we obtain
	\begin{equation}\label{basic-carleman}
	\begin{aligned}
	&
	2\lambda\nu
	\int_Q
	(t+a)^{\nu-1}
	\left(
	|D_xw|^2+|D_yw|^2
	\right)
	+
	\lambda^2\nu^2
	\int_Q
	(t+a)^{2\nu-2}w^2
	\\
	&\le
	\int_Q
	e^{2\lambda (t+a)^\nu}
	\left|
	-K_t-\Delta_xK-\Delta_yK
	\right|^2
	+
	\lambda\nu
	\left[
	(t+a)^{\nu-1}
	\int_{\mathbb T^{2d}} w^2\,dx\,dy
	\right]_0^T
	\\
	&\quad
	+
	\left[
	\int_{\mathbb T^{2d}}
	\left(
	|D_xw|^2+|D_yw|^2
	\right)\,dx\,dy
	\right]_0^T .
	\end{aligned}
	\end{equation}
	
	Finally, since
	\[
	w=e^{\lambda (t+a)^\nu}K,
	\qquad
	D_xw=e^{\lambda (t+a)^\nu}D_xK,
	\qquad
	D_yw=e^{\lambda (t+a)^\nu}D_yK,
	\]
	and $(t+a)^{\nu-1}$ is bounded from above and below on
	$[0,T]$, \eqref{basic-carleman} implies
	\[
	\begin{aligned}
	&
	\lambda
	\int_Q
	e^{2\lambda (t+a)^\nu}
	\left(
	|D_xK|^2+|D_yK|^2
	\right)
	+
	\lambda^2
	\int_Q
	e^{2\lambda (t+a)^\nu}
	|K|^2
	\\
	&\le
	C
	\int_Q
	e^{2\lambda (t+a)^\nu}
	\left|
	-K_t-\Delta_xK-\Delta_yK
	\right|^2
	\\
	&\quad
	+
	C e^{2\lambda (T+a)^\nu}
	\int_{\mathbb T^{2d}}
	\left(
	|D_xK(T)|^2
	+
	|D_yK(T)|^2
	+
	\lambda |K(T)|^2
	\right)\,dx\,dy .
	\end{aligned}
	\]
	for all $\lambda\geqslant 1$. This completes the proof.
\end{proof}

To derive a stability estimate for the functional derivative of the
master equation, we now compare the measure derivatives associated
with two different measure flows. Let $U_1$ and $U_2$ be the two
master equation solutions introduced in
Proposition~\ref{prop:master-difference-equation}. For $i=1,2$, let
$m_i=m_i(t)$ be the associated measure flow generated by $U_i$ with
the same initial distribution
\[
m_1(0)=m_2(0)=m_0,
\]
and define
\[
u_i(t,x):=U_i(t,x,m_i(t)).
\]
Accordingly, we set
\begin{equation}\label{eq:Ki-bi-Ai}
\begin{aligned}
K_i(t,x,y)
&:=
\frac{\delta U_i}{\delta m}
(t,x,m_i(t),y),
\\
b_i(t,x)
&:=
D_pH\left(x,D_xu_i(t,x)\right),
\\
A_i(t,x)
&:=
D_{pp}^2H\left(x,D_xu_i(t,x)\right),
\qquad i=1,2.
\end{aligned}
\end{equation}
By Theorem~\ref{prop:equation-for-K}, each $K_i$ satisfies
\eqref{eq:K-equation} along its own measure flow $m_i$.

\begin{prop}\label{prop:K-difference-two-flows}
	Let $K_i$, $b_i$, $A_i$, and $m_i$, $i=1,2$, be defined as above.
	Set
	\begin{equation*}\label{eq:bar-quantities}
	\overline K:=K_1-K_2,
	\qquad
	\overline b:=b_1-b_2,
	\qquad
	\overline A:=A_1-A_2,
	\qquad
	\overline m:=m_1-m_2.
	\end{equation*}
	Then $\overline K$ satisfies
	\begin{align}
	-\partial_t\overline K
	-\Delta_x\overline K
	-\Delta_y\overline K
	&+
	b_1(t,x)\cdot D_x\overline K
	+
	b_1(t,y)\cdot D_y\overline K
	\nonumber\\
	&+
	\mathcal N(\overline K)
	=
	R(\overline U,\overline m),
	\label{eq:K-difference-two-flows}
	\end{align}
	where
	\begin{align*}
	\mathcal N(\overline K)
	:={}&
	\int_{\mathbb T^d}
	m_1(t,z)
	D_y\overline K(t,x,z)
	\cdot
	A_1(t,z)D_xK_1(t,z,y)
	\,dz
	\nonumber\\
	&+
	\int_{\mathbb T^d}
	m_1(t,z)
	D_yK_2(t,x,z)
	\cdot
	A_1(t,z)D_x\overline K(t,z,y)
	\,dz,
	\label{eq:N-barK}
	\end{align*}
	and
	\begin{equation}
	\begin{aligned}
	R(\overline U,\overline m)
	:={}&
	\frac{\delta F}{\delta m}
	(x,m_1(t),y)
	-
	\frac{\delta F}{\delta m}
	(x,m_2(t),y)
	\\
	&-
	\overline b(t,x)\cdot D_xK_2(t,x,y)
	-
	\overline b(t,y)\cdot D_yK_2(t,x,y)
	\\
	&-
	\int_{\mathbb T^d}
	\overline m(t,z)
	D_yK_2(t,x,z)
	\cdot
	A_1(t,z)D_xK_2(t,z,y)
	\,dz
	\\
	&-
	\int_{\mathbb T^d}
	m_2(t,z)
	D_yK_2(t,x,z)
	\cdot
	\overline A(t,z)D_xK_2(t,z,y)
	\,dz.
	\label{eq:R-Ubar-mbar}
	\end{aligned}
	\end{equation}
	Moreover,
	\begin{equation}\label{eq:barK-terminal}
	\overline K(T,x,y)
	=
	\frac{\delta G_1}{\delta m}
	(x,m_1(T),y)
	-
	\frac{\delta G_2}{\delta m}
	(x,m_2(T),y).
	\end{equation}
\end{prop}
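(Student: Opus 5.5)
The proof is a direct subtraction of the two instances of \eqref{eq:K-equation}, followed by a telescoping rearrangement of the drift and nonlocal terms. By Theorem~\ref{prop:equation-for-K}, applied separately to $U_1$ along $m_1$ and to $U_2$ along $m_2$, each $K_i$ satisfies
\[
-\partial_tK_i-\Delta_xK_i-\Delta_yK_i+b_i(t,x)\cdot D_xK_i+b_i(t,y)\cdot D_yK_i+\mathcal I_i=\frac{\delta F}{\delta m}(x,m_i(t),y),
\]
where $\mathcal I_i(t,x,y):=\int_{\mathbb T^d}m_i(t,z)D_yK_i(t,x,z)\cdot A_i(t,z)D_xK_i(t,z,y)\,dz$. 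The terminal data are $K_i(T,x,y)=\frac{\delta G_i}{\delta m}(x,m_i(T),y)$. I will subtract the equation for $i=2$ from the one for $i=1$. The linear principal part $-\partial_t-\Delta_x-\Delta_y$ then acts on $\overline K$ immediately.

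For the first-order terms, I add and subtract $b_1\cdot D_xK_2$ (and likewise in $y$):
\[
b_1(t,x)\cdot D_xK_1-b_2(t,x)\cdot D_xK_2=b_1(t,x)\cdot D_x\overline K+\overline b(t,x)\cdot D_xK_2,
\]
and the same identity holds with $x$ replaced by $y$. The terms $b_1\cdot D_x\overline K$ and $b_1(t,y)\cdot D_y\overline K$ stay on the left-hand side. The terms $\overline b\cdot D_xK_2$ and $\overline b(t,y)\cdot D_yK_2$ move to the right with a minus sign, which gives the second line of \eqref{eq:R-Ubar-mbar}.

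For the nonlocal term I telescope the trilinear integrand $m\,D_yK\cdot A\,D_xK$ one factor at a time, in the order $D_yK$, $D_xK$, $m$, $A$:
\[
\begin{aligned}
m_1D_yK_1\cdot A_1D_xK_1-m_2D_yK_2\cdot A_2D_xK_2
={}&m_1D_y\overline K\cdot A_1D_xK_1+m_1D_yK_2\cdot A_1D_x\overline K\\
&+\overline m\,D_yK_2\cdot A_1D_xK_2+m_2D_yK_2\cdot\overline A\,D_xK_2.
\end{aligned}
\]
Each line is checked by expanding: the first line equals $m_1D_yK_1\cdot A_1D_xK_1-m_1D_yK_2\cdot A_1D_xK_2$, and the second line equals $m_1D_yK_2\cdot A_1D_xK_2-m_2D_yK_2\cdot A_2D_xK_2$. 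In all of these terms the arguments are $D_yK(t,x,z)$ and $D_xK(t,z,y)$, with $m$ and $A$ evaluated at $(t,z)$. Integrating in $z$, the first two pieces are exactly $\mathcal N(\overline K)$, which is linear in $\overline K$ with coefficients from the first flow, $K_1$ and $K_2$. The last two pieces are moved to the right-hand side with minus signs, giving the last two lines of $R(\overline U,\overline m)$. The source difference $\frac{\delta F}{\delta m}(x,m_1(t),y)-\frac{\delta F}{\delta m}(x,m_2(t),y)$ stays on the right unchanged. Collecting all terms yields \eqref{eq:K-difference-two-flows}.

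The terminal condition \eqref{eq:barK-terminal} follows by subtracting the two terminal identities. There is no real obstacle, since every step is an algebraic identity. The only point needing care is the choice of telescoping order: it must make the $\overline K$-terms carry coefficients $m_1$, $A_1$, $b_1$ and known $K_i$, matching the stated form of $\mathcal N$. The remaining terms then involve only $\overline b$, $\overline A$, $\overline m$ and the $\delta F/\delta m$ difference, all controlled later by $\overline U$ and the flow separation. This is a bookkeeping issue rather than an analytic one. I will also note that all integrations are pointwise identities in $(t,x,y)$, justified by the regularity assumed in Theorem~\ref{prop:equation-for-K}.
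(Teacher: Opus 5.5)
Your proposal is correct and follows the paper's proof essentially verbatim. You subtract the two instances of \eqref{eq:K-equation}, split the drift terms via $b_1=b_2+\overline b$, telescope the nonlocal integrand using $K_1=K_2+\overline K$, $m_1=m_2+\overline m$, $A_1=A_2+\overline A$ into the same four pieces, and subtract the terminal identities. Your explicit line-by-line check of the four-term telescoping identity is a useful verification that the paper leaves implicit.
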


\begin{proof}
	By Theorem~\ref{prop:equation-for-K}, for $i=1,2$, we have
	\begin{equation*}
	\begin{aligned}
	-\partial_tK_i
	-\Delta_xK_i
	-\Delta_yK_i
	&+
	b_i(t,x)\cdot D_xK_i
	+
	b_i(t,y)\cdot D_yK_i
	\\
	&+
	I_i(t,x,y)
	=
	\frac{\delta F}{\delta m}(x,m_i(t),y),
	\label{eq:Ki-equation-two-flows}
	\end{aligned}
	\end{equation*}
	where
	\begin{equation*}\label{eq:Ii-two-flows}
	I_i(t,x,y)
	:=
	\int_{\mathbb T^d}
	m_i(t,z)
	D_yK_i(t,x,z)
	\cdot
	A_i(t,z)D_xK_i(t,z,y)
	\,dz.
	\end{equation*}
	
	Subtracting the equation for $K_2$ from that for $K_1$, and using
	\[
	K_1=K_2+\overline K,
	\qquad
	b_1=b_2+\overline b,
	\]
	we obtain
	\begin{align}
	-\partial_t\overline K
	-\Delta_x\overline K
	-\Delta_y\overline K
	&+
	b_1(t,x)\cdot D_x\overline K
	+
	b_1(t,y)\cdot D_y\overline K
	\nonumber\\
	&+
	\overline b(t,x)\cdot D_xK_2
	+
	\overline b(t,y)\cdot D_yK_2
	+
	I_1-I_2
	\nonumber\\
	&=
	\frac{\delta F}{\delta m}(x,m_1(t),y)
	-
	\frac{\delta F}{\delta m}(x,m_2(t),y).
	\label{eq:K-difference-before-I}
	\end{align}
	
	Then for the nonlocal terms in \eqref{eq:K-difference-before-I}, using
	\[
	K_1=K_2+\overline K,
	\qquad
	A_1=A_2+\overline A,
	\qquad
	m_1=m_2+\overline m,
	\]
	we have that
	\begin{align}
	I_1-I_2
	={}&
	\int_{\mathbb T^d}
	m_1(t,z)
	D_y\overline K(t,x,z)
	\cdot
	A_1(t,z)D_xK_1(t,z,y)
	\,dz
	\nonumber\\
	&+
	\int_{\mathbb T^d}
	m_1(t,z)
	D_yK_2(t,x,z)
	\cdot
	A_1(t,z)D_x\overline K(t,z,y)
	\,dz
	\nonumber\\
	&+
	\int_{\mathbb T^d}
	\overline m(t,z)
	D_yK_2(t,x,z)
	\cdot
	A_1(t,z)D_xK_2(t,z,y)
	\,dz
	\nonumber\\
	&+
	\int_{\mathbb T^d}
	m_2(t,z)
	D_yK_2(t,x,z)
	\cdot
	\overline A(t,z)D_xK_2(t,z,y)
	\,dz.
	\label{eq:I-difference-two-flows}
	\end{align}
	
    Substituting \eqref{eq:I-difference-two-flows} into
	\eqref{eq:K-difference-before-I}, and keeping the terms containing
	$\overline K$ on the left-hand side, yields
	\eqref{eq:K-difference-two-flows}--\eqref{eq:R-Ubar-mbar}.
	
	Finally, by the terminal condition in
	Theorem~\ref{prop:equation-for-K},
	\[
	K_i(T,x,y)
	=
	\frac{\delta G_i}{\delta m}
	(x,m_i(T),y),
	\qquad i=1,2.
	\]
	Therefore,
	\[
	\overline K(T,x,y)
	=
	\frac{\delta G_1}{\delta m}
	(x,m_1(T),y)
	-
	\frac{\delta G_2}{\delta m}
	(x,m_2(T),y),
	\]
	which proves \eqref{eq:barK-terminal}.
	The proof is complete.
\end{proof}

Motivated by Proposition~\ref{prop:K-difference-two-flows}, we introduce
the following nonlocal parabolic operator. For a function
$V=V(t,x,y)$, define
\begin{align*}
\mathcal{L}_K V
:={}&
-\partial_tV-\Delta_xV-\Delta_yV
+b_1(t,x)\cdot D_xV
+b_1(t,y)\cdot D_yV
\nonumber\\
&+
\int_{\mathbb{T}^d}
m_1(t,z)
D_yV(t,x,z)
\cdot
A_1(t,z)D_xK_1(t,z,y)
\,dz
\nonumber\\
&+
\int_{\mathbb{T}^d}
m_1(t,z)
D_yK_2(t,x,z)
\cdot
A_1(t,z)D_xV(t,z,y)
\,dz.
\label{eq:full-K-operator}
\end{align*}
With this notation, equation
\eqref{eq:K-difference-two-flows} can be written as
\begin{equation}\label{eq:full-K-difference-short}
\mathcal{L}_K\overline K
=
R(\overline U,\overline m).
\end{equation}

\begin{thm}
	\label{thm:full-K-Carleman}
	Let $V=V(t,x,y)$ satisfy the same regularity assumption as in
	Theorem~\ref{carleman-estimate-m}.
	Assume that there exists a constant $M>0$ such that
	\begin{equation}\label{eq:full-K-operator-bounds}
	\begin{aligned}
	\|b_i\|_{L^\infty(Q_T)}
	&+
	\|A_i\|_{L^\infty(Q_T)}
	+
	\|m_i\|_{L^\infty(Q_T)}\\
	&+
	\|D_xK_i\|_{L^\infty(Q)}
	+
	\|D_yK_i\|_{L^\infty(Q)}
	\leq M,   \quad i=1,2.
	\end{aligned}
	\end{equation}
	For any $a>0$ and $\nu>1$, let
	$\varphi_\lambda(t)=e^{\lambda(t+a)^\nu}$
	be the weight function introduced in
	Theorem~\ref{carleman-estimate-m}.
	Then there exist constants
	$\lambda_1=\lambda_1(T,a,\nu,d,M)\geq 1,
	C=C(T,a,\nu,d,M)>0$,
	such that, for all $\lambda\geq\lambda_1$,
	\begin{equation}
	\begin{aligned}
	&
	\lambda
	\int_Q
	e^{2\lambda(t+a)^\nu}
	\left(
	|D_xV|^2+|D_yV|^2
	\right)
	\,dx\,dy\,dt
	+
	\lambda^2
	\int_Q
	e^{2\lambda(t+a)^\nu}
	|V|^2
	\,dx\,dy\,dt
	\\
	&\leq
	C
	\int_Q
	e^{2\lambda(t+a)^\nu}
	|\mathcal{L}_K V|^2
	\,dx\,dy\,dt
	\\
	&\quad+
	Ce^{2\lambda(T+a)^\nu}
	\int_{\mathbb{T}^d\times\mathbb{T}^d}
	\left(
	|D_xV(T,x,y)|^2
	+
	|D_yV(T,x,y)|^2
	+
	\lambda|V(T,x,y)|^2
	\right)
	\,dx\,dy.
	\label{eq:full-K-Carleman}
	\end{aligned}
	\end{equation}
	
	In particular, taking $V=\overline K$ and using
	\eqref{eq:full-K-difference-short}, we obtain
	\begin{equation}
	\begin{aligned}
	&
	\lambda
	\int_Q
	e^{2\lambda(t+a)^\nu}
	\left(
	|D_x\overline K|^2+|D_y\overline K|^2
	\right)
	\,dx\,dy\,dt
	+
	\lambda^2
	\int_Q
	e^{2\lambda(t+a)^\nu}
	|\overline K|^2
	\,dx\,dy\,dt
	\\
	&\leq
	C
	\int_Q
	e^{2\lambda(t+a)^\nu}
	|R(\overline U,\overline m)|^2
	\,dx\,dy\,dt
	\\
	&\quad+
	Ce^{2\lambda(T+a)^\nu}
	\int_{\mathbb{T}^d\times\mathbb{T}^d}
	\left(
	|D_x\overline K(T,x,y)|^2
	+
	|D_y\overline K(T,x,y)|^2
	+
	\lambda|\overline K(T,x,y)|^2
	\right)
	\,dx\,dy.
	\label{eq:barK-Carleman-stability}
	\end{aligned}
	\end{equation}
\end{thm}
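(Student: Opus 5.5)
The plan is to reduce \eqref{eq:full-K-Carleman} to the basic Carleman estimate of Theorem~\ref{carleman-estimate-m}, treating every term of $\mathcal L_K$ other than $-\partial_t-\Delta_x-\Delta_y$ as a first-order perturbation. We then absorb these terms into the left-hand side for large $\lambda$, exactly as in the proof of Theorem~\ref{thm:full-master-Carleman}. We write
\[
-\partial_tV-\Delta_xV-\Delta_yV=\mathcal L_KV-b_1(t,x)\cdot D_xV-b_1(t,y)\cdot D_yV-\mathcal N_1V-\mathcal N_2V,
\]
where $\mathcal N_1V$ and $\mathcal N_2V$ denote the two nonlocal integrals in $\mathcal L_K$. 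Applying Theorem~\ref{carleman-estimate-m} to $V$ gives the desired left-hand side, bounded by $C\int_Q e^{2\lambda(t+a)^\nu}|\mathcal L_KV|^2$ plus the weighted lower-order terms and the terminal contribution.

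The local drift terms are controlled directly by \eqref{eq:full-K-operator-bounds}:
\[
|b_1(t,x)\cdot D_xV|^2+|b_1(t,y)\cdot D_yV|^2\le M^2\bigl(|D_xV|^2+|D_yV|^2\bigr).
\]
For the nonlocal terms we use Cauchy--Schwarz in $z$ at each fixed $(t,x,y)$. For the first one,
\[
|\mathcal N_1V(t,x,y)|^2\le \Bigl(\int_{\mathbb T^d}|m_1A_1D_xK_1(t,z,y)|^2dz\Bigr)\Bigl(\int_{\mathbb T^d}|D_yV(t,x,z)|^2dz\Bigr)\le M^6\int_{\mathbb T^d}|D_yV(t,x,z)|^2dz,
\]
using $|\mathbb T^d|=1$. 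Integrating in $(x,y)$, the $y$-integration is trivial because the right-hand side no longer depends on $y$. This gives $\|\mathcal N_1V(t)\|_{L^2(\mathbb T^{2d})}^2\le M^6\|D_yV(t)\|_{L^2(\mathbb T^{2d})}^2$. Symmetrically,
\[
|\mathcal N_2V(t,x,y)|^2\le M^6\int_{\mathbb T^d}|D_xV(t,z,y)|^2dz,
\]
so after integrating in $x$ we get $\|\mathcal N_2V(t)\|^2\le M^6\|D_xV(t)\|^2$. The weight depends only on $t$, so it passes through these pointwise-in-time inequalities unchanged. Hence
\[
\int_Q e^{2\lambda(t+a)^\nu}|-V_t-\Delta_xV-\Delta_yV|^2\le C\int_Q e^{2\lambda(t+a)^\nu}|\mathcal L_KV|^2+C_2\int_Q e^{2\lambda(t+a)^\nu}\bigl(|D_xV|^2+|D_yV|^2\bigr),
\]
with $C_2=C_2(M)$ independent of $\lambda$.

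Substituting this into Theorem~\ref{carleman-estimate-m} yields a right-hand side containing $C_1C_2\int_Q e^{2\lambda(t+a)^\nu}(|D_xV|^2+|D_yV|^2)$. We choose $\lambda_1=\max\{1,2C_1C_2\}$; for $\lambda\ge\lambda_1$ this term is absorbed by half of the left-hand side gradient term. This proves \eqref{eq:full-K-Carleman}.

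The specialization \eqref{eq:barK-Carleman-stability} follows by taking $V=\overline K$ and using \eqref{eq:full-K-difference-short}. This requires $\overline K$ to have the regularity of Theorem~\ref{carleman-estimate-m}, which we take as part of the standing assumption.

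The main point to check is the nonlocal terms. One must make sure the Cauchy--Schwarz is taken in the integration variable $z$, so that the bound only uses $L^\infty$ control of the coefficient kernels $D_xK_1$ and $D_yK_2$, and not of $V$ itself. One must also check that the resulting bound involves $\nabla V$ only in $L^2(\mathbb T^{2d})$ at the same time $t$. Since the Carleman weight is purely time dependent, this locality in time is exactly what allows absorption. No spatial weight is needed, so no commutator terms arise from the nonlocal operators.
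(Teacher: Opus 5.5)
Your proposal is correct and follows the paper's proof essentially step for step. You apply Theorem~\ref{carleman-estimate-m}, bound the drift and the two nonlocal terms at each fixed $t$ by $C(M)\bigl(\|D_xV(t)\|_{L^2(\mathbb{T}^{2d})}^2+\|D_yV(t)\|_{L^2(\mathbb{T}^{2d})}^2\bigr)$ via Cauchy--Schwarz in $z$, and absorb for $\lambda\ge\lambda_1=\max\{1,2C_1C_2\}$; the only cosmetic difference is that the paper takes Cauchy--Schwarz with respect to $m_1(t,z)\,dz$ and then uses $\|m_1\|_{L^\infty}\le M$, whereas you fold $m_1$ into the bounded kernel and use Lebesgue measure, which gives the same bound.
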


\begin{proof}
	Applying Theorem~\ref{carleman-estimate-m} to $V$, we have
	\begin{equation}
	\begin{aligned}
	&
	\lambda
	\int_Q
	e^{2\lambda(t+a)^\nu}
	\left(
	|D_xV|^2+|D_yV|^2
	\right)
	\,dx\,dy\,dt
	+
	\lambda^2
	\int_Q
	e^{2\lambda(t+a)^\nu}
	|V|^2
	\,dx\,dy\,dt
	\\
	&\leq
	C_0
	\int_Q
	e^{2\lambda(t+a)^\nu}
	\left|
	-\partial_tV-\Delta_xV-\Delta_yV
	\right|^2
	\,dx\,dy\,dt
	\\
	&\quad+
	C_0e^{2\lambda(T+a)^\nu}
	\int_{\mathbb{T}^d\times\mathbb{T}^d}
	\left(
	|D_xV(T,x,y)|^2
	+
	|D_yV(T,x,y)|^2
	+
	\lambda|V(T,x,y)|^2
	\right)
	\,dx\,dy,
	\label{eq:apply-basic-K-Carleman}
	\end{aligned}
	\end{equation}
	where
	\[
	C_0=C_0(T,a,\nu,d)>0.
	\]
	
	By the definition of $\mathcal{L}_K$, we have
	\begin{align}
	-\partial_tV-\Delta_xV-\Delta_yV
	={}&
	\mathcal{L}_K V
	-
	b_1(t,x)\cdot D_xV
	-
	b_1(t,y)\cdot D_yV
	\nonumber\\
	&-
	\mathcal{N}_1(V)
	-
	\mathcal{N}_2(V),
	\label{eq:principal-K-from-full}
	\end{align}
	where
	\begin{equation*}\label{eq:N1-V}
	\mathcal{N}_1(V)(t,x,y)
	:=
	\int_{\mathbb{T}^d}
	m_1(t,z)
	D_yV(t,x,z)
	\cdot
	A_1(t,z)D_xK_1(t,z,y)
	\,dz,
	\end{equation*}
	and
	\begin{equation*}\label{eq:N2-V}
	\mathcal{N}_2(V)(t,x,y)
	:=
	\int_{\mathbb{T}^d}
	m_1(t,z)
	D_yK_2(t,x,z)
	\cdot
	A_1(t,z)D_xV(t,z,y)
	\,dz.
	\end{equation*}
	
	By the Cauchy--Schwarz
	inequality,
	\begin{align*}
	|\mathcal{N}_1(V)(t,x,y)|^2
	\leq{}&
	\left(
	\int_{\mathbb{T}^d}
	m_1(t,z)|D_yV(t,x,z)|^2\,dz
	\right)
	\nonumber\\
	&\times
	\left(
	\int_{\mathbb{T}^d}
	m_1(t,z)
	|A_1(t,z)D_xK_1(t,z,y)|^2
	\,dz
	\right).
	\label{eq:N1-CS-new}
	\end{align*}
	Since $m_1(t,\cdot)$ is a probability density, by
	\eqref{eq:full-K-operator-bounds},
	\[
	\int_{\mathbb{T}^d}
	m_1(t,z)
	|A_1(t,z)D_xK_1(t,z,y)|^2
	\,dz
	\leq C_1,
	\]
	where $C_1=C_1(M)>0$. Hence
	\[
	|\mathcal{N}_1(V)(t,x,y)|^2
	\leq
	C_1
	\int_{\mathbb{T}^d}
	m_1(t,z)|D_yV(t,x,z)|^2\,dz.
	\]
	Using again
	\[
	\|m_1\|_{L^\infty(Q_T)}\leq M
	\]
	and integrating with respect to $x$ and $y$, we obtain
	\begin{equation*}\label{eq:N1-L2-new}
	\int_{\mathbb{T}^d\times\mathbb{T}^d}
	|\mathcal{N}_1(V)|^2
	\,dx\,dy
	\leq
	C_1
	\int_{\mathbb{T}^d\times\mathbb{T}^d}
	|D_yV|^2
	\,dx\,dy.
	\end{equation*}
	
	Similarly, by the Cauchy--Schwarz inequality,
	\begin{align*}
	|\mathcal{N}_2(V)(t,x,y)|^2
	\leq{}&
	\left(
	\int_{\mathbb{T}^d}
	m_1(t,z)
	|D_yK_2(t,x,z)A_1(t,z)|^2
	\,dz
	\right)
	\nonumber\\
	&\times
	\left(
	\int_{\mathbb{T}^d}
	m_1(t,z)|D_xV(t,z,y)|^2
	\,dz
	\right).
	\label{eq:N2-CS-new}
	\end{align*}
	Using \eqref{eq:full-K-operator-bounds} and integrating over
	$\mathbb{T}^d\times\mathbb{T}^d$, we obtain
	\begin{equation*}\label{eq:N2-L2-new}
	\int_{\mathbb{T}^d\times\mathbb{T}^d}
	|\mathcal{N}_2(V)|^2
	\,dx\,dy
	\leq
	C_1
	\int_{\mathbb{T}^d\times\mathbb{T}^d}
	|D_xV|^2
	\,dx\,dy.
	\end{equation*}
	
	Therefore,
	\begin{equation}
	\begin{aligned}
	&
	\int_Q
	e^{2\lambda(t+a)^\nu}
	\left(
	|\mathcal{N}_1(V)|^2
	+
	|\mathcal{N}_2(V)|^2
	\right)
	\,dx\,dy\,dt
	\\
	&\leq
	C_1
	\int_Q
	e^{2\lambda(t+a)^\nu}
	\left(
	|D_xV|^2+|D_yV|^2
	\right)
	\,dx\,dy\,dt.
	\label{eq:N-full-bound}
	\end{aligned}
	\end{equation}
	
	Moreover, using
	\[
	\|b_1\|_{L^\infty(Q_T)}\leq M,
	\]
	we have
	\begin{equation}
	\begin{aligned}
	&
	\int_Q
	e^{2\lambda(t+a)^\nu}
	\left|
	b_1(t,x)\cdot D_xV
	+
	b_1(t,y)\cdot D_yV
	\right|^2
	\,dx\,dy\,dt
	\\
	&\leq
	C_1
	\int_Q
	e^{2\lambda(t+a)^\nu}
	\left(
	|D_xV|^2+|D_yV|^2
	\right)
	\,dx\,dy\,dt.
	\label{eq:b-full-bound}
	\end{aligned}
	\end{equation}
	
	Combining
	\eqref{eq:principal-K-from-full},
	\eqref{eq:N-full-bound},
	and \eqref{eq:b-full-bound}, we obtain
    \begin{equation}
	\begin{aligned}
	&
	\int_Q
	e^{2\lambda(t+a)^\nu}
	\left|
	-\partial_tV-\Delta_xV-\Delta_yV
	\right|^2
	\,dx\,dy\,dt
	\\
	&\leq
	C
	\int_Q
	e^{2\lambda(t+a)^\nu}
	|\mathcal{L}_K V|^2
	\,dx\,dy\,dt
	\\
	&\quad+
	C_1
	\int_Q
	e^{2\lambda(t+a)^\nu}
	\left(
	|D_xV|^2+|D_yV|^2
	\right)
	\,dx\,dy\,dt,
	\label{eq:full-K-to-principal}
	\end{aligned}
	\end{equation}
	where $C_1=C_1(M)>0$ is independent of $\lambda$.
	
	Substituting \eqref{eq:full-K-to-principal} into
	\eqref{eq:apply-basic-K-Carleman}, we obtain
	\begin{align}
	&
	\lambda
	\int_Q
	e^{2\lambda(t+a)^\nu}
	\left(
	|D_xV|^2+|D_yV|^2
	\right)
	\,dx\,dy\,dt
	+
	\lambda^2
	\int_Q
	e^{2\lambda(t+a)^\nu}
	|V|^2
	\,dx\,dy\,dt
	\nonumber\\
	&\leq
	C
	\int_Q
	e^{2\lambda(t+a)^\nu}
	|\mathcal{L}_K V|^2
	\,dx\,dy\,dt
	\nonumber\\
	&\quad+
	C_0C_1
	\int_Q
	e^{2\lambda(t+a)^\nu}
	\left(
	|D_xV|^2+|D_yV|^2
	\right)
	\,dx\,dy\,dt
	\nonumber\\
	&\quad+
	Ce^{2\lambda(T+a)^\nu}
	\int_{\mathbb{T}^d\times\mathbb{T}^d}
	\left(
	|D_xV(T,x,y)|^2
	+
	|D_yV(T,x,y)|^2
	+
	\lambda|V(T,x,y)|^2
	\right)
	\,dx\,dy.
	\label{eq:before-K-absorption}
	\end{align}
	
	Set
	\begin{equation*}\label{eq:lambda1-K}
	\lambda_1
	:=
	\max\left\{
	1,\,
	2C_0C_1
	\right\}.
	\end{equation*}
	Then, for every $\lambda\geq\lambda_1$,
	\[
	\lambda-C_0C_1
	\geq
	\frac{\lambda}{2}.
	\]
	Hence the second term on the right-hand side of
	\eqref{eq:before-K-absorption} can be moved to the left-hand side.
	After adjusting the constant $C$, we obtain
	\eqref{eq:full-K-Carleman}.
	
	Finally, taking
	\[
	V=\overline K
	\]
	and using
	\[
	\mathcal{L}_K\overline K
	=
	R(\overline U,\overline m),
	\]
	we obtain \eqref{eq:barK-Carleman-stability}.
	The proof is complete.
\end{proof}

We next estimate the separation between the two measure flows.
Recall that $m_1$ and $m_2$ are generated by $U_1$ and $U_2$,
respectively, from the same initial distribution $m_0$. The following
estimate allows us to control the difference of the two measure flows
by the difference of the master equation solutions evaluated along
the reference flow $m_1$.

\begin{prop}\label{prop:characteristic-separation}
	Let $U_1$ and $U_2$ be as in
	Proposition~\ref{prop:master-difference-equation}, and let
	$m_i$, $u_i$, and $b_i$, $i=1,2$, be defined as in
	Proposition~\ref{prop:K-difference-two-flows}. Assume that
	$m_1(0)=m_2(0)=m_0$.
	Recall that
	$\overline U=U_1-U_2,
	\overline m=m_1-m_2,
	\overline b=b_1-b_2.
	$
	Assume that there exists a constant $M>0$ such that
	\begin{equation}\label{eq:characteristic-separation-assumptions}
	\begin{aligned}
	&
	\|m_i\|_{L^\infty(Q_T)}
	+
	\|\operatorname{div}b_i\|_{L^\infty(Q_T)}
	+
	\|D_{pp}^2H\|_{L^\infty(\mathbb{T}^d\times\mathbb{R}^d)}
	\\
	&\qquad+
	\sup_{m\in\mathcal{P}(\mathbb{T}^d)}
	\left\|
	D_x\frac{\delta U_i}{\delta m}
	(\cdot,\cdot,m,\cdot)
	\right\|_{L^\infty(Q)}
	\leq M, \quad i=1,2.
	\end{aligned}
	\end{equation}
	Then there exists a constant
	$C=C(T,d,M)>0$
	such that
	\begin{align}
	&
	\sup_{t\in[0,T]}
	\|\overline m(t,\cdot)\|_{L^2(\mathbb{T}^d)}^2
	+
	\int_0^T
	\|D_x\overline m(t,\cdot)\|_{L^2(\mathbb{T}^d)}^2\,dt
	\nonumber\\
	&\qquad\leq
	C
	\int_0^T
	\left\|
	D_x\overline U(t,\cdot,m_1(t))
	\right\|_{L^2(\mathbb{T}^d)}^2\,dt.
	\label{eq:characteristic-separation}
	\end{align}
	
	Consequently, for every $\lambda\geq\lambda_0$ for which
	Theorem~\ref{thm:full-master-Carleman} holds,
	\begin{align}
	&
	\sup_{t\in[0,T]}
	\|\overline m(t,\cdot)\|_{L^2(\mathbb{T}^d)}^2
	+
	\int_0^T
	\|D_x\overline m(t,\cdot)\|_{L^2(\mathbb{T}^d)}^2\,dt
	\nonumber\\
	&\qquad\leq
	C
	e^{2\lambda((T+a)^\nu-a^\nu)}
	\left\|
	\overline U(T,\cdot,m_1(T))
	\right\|_{H^1(\mathbb{T}^d)}^2 .
	\label{eq:characteristic-separation-terminal}
	\end{align}
\end{prop}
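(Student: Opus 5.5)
Subtracting the two Fokker--Planck equations gives the equation for $\overline m$, and a standard $L^2$ energy estimate combined with Gr\"onwall then yields \eqref{eq:characteristic-separation}. Since $m_i$ solves $\partial_t m_i-\Delta m_i-\operatorname{div}(m_ib_i)=0$ with $b_i(t,x)=D_pH(x,D_xU_i(t,x,m_i(t)))$, we have
\[
\partial_t\overline m-\Delta\overline m-\operatorname{div}(\overline m\, b_1)=\operatorname{div}(m_2\,\overline b),\qquad \overline m(0)=0 .
\]
We test this equation with $\overline m$ and integrate over $\mathbb T^d$. The transport term is handled as $\int \overline m\,b_1\cdot D_x\overline m=-\tfrac12\int|\overline m|^2\operatorname{div}b_1$, which is bounded using $\|\operatorname{div} b_1\|_\infty\le M$. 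Alternatively, Young's inequality gives the same control with $\|b_1\|_\infty$. The source term is bounded by $\|m_2\|_\infty\|\overline b\|_{L^2}\|D_x\overline m\|_{L^2}$, which Young splits into $\tfrac12\|D_x\overline m\|^2+C\|\overline b\|^2$. This produces
\[
\tfrac{d}{dt}\|\overline m\|_{L^2}^2+\|D_x\overline m\|_{L^2}^2\le C\|\overline m\|_{L^2}^2+C\|\overline b(t)\|_{L^2}^2,
\]
and Gr\"onwall with $\overline m(0)=0$ gives the left side of \eqref{eq:characteristic-separation} bounded by $C\int_0^T\|\overline b\|_{L^2}^2\,dt$.

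The main obstacle is converting $\overline b$ into $D_x\overline U(t,\cdot,m_1(t))$. The quantity $\overline b$ compares $D_xU_1$ and $D_xU_2$ evaluated at \emph{different} measures $m_1(t)$ and $m_2(t)$. By the Lipschitz property of $D_pH$ (from $\|D^2_{pp}H\|_\infty\le M$),
\[
|\overline b(t,x)|\le M\Big(|D_x\overline U(t,x,m_1(t))|+|D_xU_2(t,x,m_1(t))-D_xU_2(t,x,m_2(t))|\Big).
\]
For the second difference we use the definition of the functional derivative along the segment $m_s=(1-s)m_2+sm_1$:
\[
D_xU_2(t,x,m_1)-D_xU_2(t,x,m_2)=\int_0^1\!\!\int_{\mathbb T^d} D_x\frac{\delta U_2}{\delta m}(t,x,m_s,y)\,\overline m(t,y)\,dy\,ds .
\]
Commuting $D_x$ with $\delta/\delta m$ is justified by the classical regularity. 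The assumed uniform bound on $D_x\,\delta U_2/\delta m$ then gives a pointwise bound $M\|\overline m(t)\|_{L^1}\le M\|\overline m(t)\|_{L^2}$. Hence
\[
\|\overline b(t)\|_{L^2}^2\le C\|D_x\overline U(t,\cdot,m_1(t))\|_{L^2}^2+C\|\overline m(t)\|_{L^2}^2 .
\]
The last term is again absorbed by Gr\"onwall. Inserting this into the energy inequality before applying Gr\"onwall yields \eqref{eq:characteristic-separation} with $C=C(T,d,M)$.

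For \eqref{eq:characteristic-separation-terminal} we use \eqref{eq:master-difference-Carleman} from Theorem~\ref{thm:full-master-Carleman}. Since $e^{2\lambda(t+a)^\nu}\ge e^{2\lambda a^\nu}$ on $[0,T]$ and $\lambda\ge\lambda_0\ge1$, we get
\[
\int_0^T\|D_x\overline U(t,\cdot,m_1(t))\|_{L^2}^2dt\le \lambda^{-1}e^{-2\lambda a^\nu}Ce^{2\lambda(T+a)^\nu}\big(\|D_x\overline U(T)\|^2+\lambda\|\overline U(T)\|^2\big)\le Ce^{2\lambda((T+a)^\nu-a^\nu)}\|\overline U(T,\cdot,m_1(T))\|_{H^1}^2 .
\]
Combining this with \eqref{eq:characteristic-separation} finishes the proof. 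Fixing $\lambda=\lambda_0$ turns the exponential into a constant, which is the form needed later.
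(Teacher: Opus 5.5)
Your proposal is correct and follows the paper's proof essentially step for step. The matching steps are: the same equation for $\overline m$ tested against $\overline m$, with the transport term rewritten via $\operatorname{div}b_1$; the same splitting of $\overline b$ through $D_xU_2(t,x,m_1(t))$, using the segment $m_s=(1-s)m_2+sm_1$ and the uniform bound on $D_x\,\delta U_2/\delta m$; Gr\"onwall from $\overline m(0)=0$; and the lower bound $e^{2\lambda(t+a)^\nu}\ge e^{2\lambda a^\nu}$ applied to \eqref{eq:master-difference-Carleman}. Drop your aside about using $\|b_1\|_{L^\infty}$ instead of $\|\operatorname{div}b_1\|_{L^\infty}$, since that bound is not among this proposition's hypotheses.
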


\begin{proof}
	For $i=1,2$, the measure flow $m_i$ satisfies
	\[
	\partial_tm_i-\Delta m_i-\operatorname{div}(m_i b_i)=0.
	\]
	Subtracting the equation for $m_2$ from that for $m_1$, and using
	\[
	m_1b_1-m_2b_2
	=
	\overline m\,b_1+m_2\overline b,
	\]
	we obtain that
	\begin{equation}\label{eq:bar-m-equation}
	\partial_t\overline m
	-\Delta\overline m
	-\operatorname{div}(\overline m\,b_1)
	=
	\operatorname{div}(m_2\overline b),
	\end{equation}
	with
	$\overline m(0,x)=0$.

	We first estimate $\overline b$. By the definition in \eqref{eq:Ki-bi-Ai}, we have that
	\begin{align}
	\overline b(t,x)
	={}&
	D_pH\left(
	x,D_xU_1(t,x,m_1(t))
	\right)
	-
	D_pH\left(
	x,D_xU_2(t,x,m_1(t))
	\right)
	\nonumber\\
	&+
	D_pH\left(
	x,D_xU_2(t,x,m_1(t))
	\right)
	-
	D_pH\left(
	x,D_xU_2(t,x,m_2(t))
	\right).
	\label{eq:b-difference-decomposition}
	\end{align}
	
	By \eqref{eq:characteristic-separation-assumptions},
	the first difference on the right-hand side satisfies
	\begin{align}
	&
	\left|
	D_pH\left(
	x,D_xU_1(t,x,m_1(t))
	\right)
	-
	D_pH\left(
	x,D_xU_2(t,x,m_1(t))
	\right)
	\right|
	\nonumber\\
	&\qquad\leq
	C
	\left|
	D_x\overline U(t,x,m_1(t))
	\right|.
	\label{eq:b-difference-first-part}
	\end{align}
	
	To estimate the second difference, for $s\in[0,1]$ define
	\[
	m_s(t):=(1-s)m_2(t)+sm_1(t).
	\]
	Then we can obtain that
	\begin{equation}
	\begin{aligned}
	&
	D_xU_2(t,x,m_1(t))
	-
	D_xU_2(t,x,m_2(t))
	\nonumber\\
	&\qquad=
	\int_0^1
	\int_{\mathbb{T}^d}
	D_x\frac{\delta U_2}{\delta m}
	(t,x,m_s(t),y)
	\,\overline m(t,y)
	\,dy\,ds.
	\label{eq:U2-measure-difference}
	\end{aligned}
	\end{equation}
	Hence, by
	\eqref{eq:characteristic-separation-assumptions},
	\begin{equation}\label{estimate_DU}
	\begin{aligned}
	&
	\left|
	D_xU_2(t,x,m_1(t))
	-
	D_xU_2(t,x,m_2(t))
	\right|
	\\
	&\qquad\leq
	C
	\|\overline m(t,\cdot)\|_{L^1(\mathbb{T}^d)}
	\leq
	C
	\|\overline m(t,\cdot)\|_{L^2(\mathbb{T}^d)}.
	\end{aligned}
	\end{equation}
	
	Using again the boundedness of $D_{pp}^2H$, we obtain from
	\eqref{eq:b-difference-decomposition}--%
	\eqref{eq:b-difference-first-part}
	\begin{equation}\label{eq:b-difference-L2}
	\|\overline b(t,\cdot)\|_{L^2(\mathbb{T}^d)}^2
	\leq
	C
	\left(
	\left\|
	D_x\overline U(t,\cdot,m_1(t))
	\right\|_{L^2(\mathbb{T}^d)}^2
	+
	\|\overline m(t,\cdot)\|_{L^2(\mathbb{T}^d)}^2
	\right).
	\end{equation}
	
	We now derive an energy estimate for
	\eqref{eq:bar-m-equation}. Multiplying
	\eqref{eq:bar-m-equation} by $\overline m$ and integrating over
	$\mathbb{T}^d$, we obtain
	\begin{align*}
	&
	\frac{1}{2}
	\frac{d}{dt}
	\|\overline m(t,\cdot)\|_{L^2(\mathbb{T}^d)}^2
	+
	\|D_x\overline m(t,\cdot)\|_{L^2(\mathbb{T}^d)}^2
	\\
	&\quad
	-
	\frac{1}{2}
	\int_{\mathbb{T}^d}
	\operatorname{div}b_1(t,x)
	|\overline m(t,x)|^2\,dx
	\\
	&=
	-
	\int_{\mathbb{T}^d}
	m_2(t,x)\overline b(t,x)
	\cdot D_x\overline m(t,x)\,dx.
	\label{eq:bar-m-energy}
	\end{align*}
	
	By \eqref{eq:characteristic-separation-assumptions}
	and Young's inequality,
	\begin{equation*}
	\begin{aligned}
	\left|
	\int_{\mathbb{T}^d}
	m_2\overline b\cdot D_x\overline m\,dx
	\right|
	\leq
	\frac{1}{2}
	\|D_x\overline m\|_{L^2}^2
	+
	C\|\overline b\|_{L^2}^2,
	\label{eq:bar-m-source-estimate}
	\end{aligned}
	\end{equation*}
	while
	\[
	\left|
	\int_{\mathbb{T}^d}
	\operatorname{div}b_1
	|\overline m|^2\,dx
	\right|
	\leq
	C\|\overline m\|_{L^2}^2.
	\]
	Consequently, using \eqref{eq:b-difference-L2}, we obtain
	\begin{align}
	&
	\frac{d}{dt}
	\|\overline m(t,\cdot)\|_{L^2}^2
	+
	\|D_x\overline m(t,\cdot)\|_{L^2}^2
	\nonumber\\
	&\qquad\leq
	C\|\overline m(t,\cdot)\|_{L^2}^2
	+
	C
	\left\|
	D_x\overline U(t,\cdot,m_1(t))
	\right\|_{L^2}^2.
	\label{eq:bar-m-differential-inequality}
	\end{align}
	
	Since $\overline m(0)=0$, Gronwall's inequality gives
	\begin{equation}\label{eq:bar-m-L2-control}
	\sup_{t\in[0,T]}
	\|\overline m(t,\cdot)\|_{L^2}^2
	\leq
	C
	\int_0^T
	\left\|
	D_x\overline U(t,\cdot,m_1(t))
	\right\|_{L^2}^2\,dt.
	\end{equation}
	Integrating \eqref{eq:bar-m-differential-inequality} over $(0,T)$
	and using \eqref{eq:bar-m-L2-control}, we also obtain
	\[
	\int_0^T
	\|D_x\overline m(t,\cdot)\|_{L^2}^2\,dt
	\leq
	C
	\int_0^T
	\left\|
	D_x\overline U(t,\cdot,m_1(t))
	\right\|_{L^2}^2\,dt.
	\]
	This proves \eqref{eq:characteristic-separation}.
	
	Finally, applying
	Theorem~\ref{thm:full-master-Carleman} to
	$V=\overline U$ and using
	\[
	\mathcal{P}\overline U=0,
	\]
	we obtain
	\begin{align*}
	&
	\lambda
	\int_0^T
	e^{2\lambda(t+a)^\nu}
	\left\|
	D_x\overline U(t,\cdot,m_1(t))
	\right\|_{L^2}^2\,dt
	\nonumber\\
	&\qquad\leq
	Ce^{2\lambda(T+a)^\nu}
	\left(
	\left\|
	D_x\overline U(T,\cdot,m_1(T))
	\right\|_{L^2}^2
	+
	\lambda
	\left\|
	\overline U(T,\cdot,m_1(T))
	\right\|_{L^2}^2
	\right).
	\label{eq:U-gradient-terminal}
	\end{align*}
	Since
	\[
	e^{2\lambda(t+a)^\nu}
	\geq e^{2\lambda a^\nu},
	\qquad t\in[0,T],
	\]
	it follows that
	\begin{equation}
	\begin{aligned}
	&
	\int_0^T
	\left\|
	D_x\overline U(t,\cdot,m_1(t))
	\right\|_{L^2}^2\,dt
	\nonumber\\
	&\qquad\leq
	C
	e^{2\lambda((T+a)^\nu-a^\nu)}
	\left(
	\frac{1}{\lambda}
	\left\|
	D_x\overline U(T,\cdot,m_1(T))
	\right\|_{L^2}^2
	+
	\left\|
	\overline U(T,\cdot,m_1(T))
	\right\|_{L^2}^2
	\right)
	\nonumber\\
	&\qquad\leq
	C
	e^{2\lambda((T+a)^\nu-a^\nu)}
	\left\|
	\overline U(T,\cdot,m_1(T))
	\right\|_{H^1(\mathbb{T}^d)}^2,
	\label{eq:U-gradient-terminal-unweighted}
	\end{aligned}
	\end{equation}
	where we used $\lambda\geq\lambda_0\geq1$ in the last inequality.
	Combining this estimate with
	\eqref{eq:characteristic-separation} yields
	\eqref{eq:characteristic-separation-terminal}.
	The proof is complete.
\end{proof}

We now combine the estimates obtained in the previous sections to
derive a stability estimate for the master equation and its functional
derivative along the reference measure flow $m_1$.

Let $U_1$ and $U_2$ be the two master equation solutions introduced in
Proposition~\ref{prop:master-difference-equation}. Recall that
\[
\overline U:=U_1-U_2.
\]
Let $m_i$, $K_i$, $b_i$, and $A_i$, $i=1,2$, be defined as in
Proposition~\ref{prop:K-difference-two-flows}, with
\[
m_1(0)=m_2(0)=m_0.
\]
We further recall that
\[
\overline m:=m_1-m_2,
\qquad
\overline K:=K_1-K_2,
\qquad
\overline b:=b_1-b_2,
\qquad
\overline A:=A_1-A_2.
\]

\begin{thm}
	\label{thm:master-stability}
	Assume that the hypotheses of
	Theorem~\ref{thm:full-master-Carleman} are satisfied for
	$V=\overline U$, that the hypotheses of
	Theorem~\ref{thm:full-K-Carleman} are satisfied for
	$V=\overline K$, and that the hypotheses of
	Proposition~\ref{prop:characteristic-separation} hold.	
	Assume, in addition, that there exists a constant $M>0$ such that
	\begin{equation}\label{eq:Hessian-H-Lipschitz-final}
	\left|
	D_{pp}^2H(x,p_1)-D_{pp}^2H(x,p_2)
	\right|
	\leq
	M|p_1-p_2|,
	\qquad
	x\in\mathbb{T}^d,\quad p_1,p_2\in\mathbb{R}^d,
	\end{equation}
	and
	\begin{equation}\label{eq:Fm-measure-Lipschitz-final}
	\left\|
	\frac{\delta F}{\delta m}(\cdot,m,\cdot)
	-
	\frac{\delta F}{\delta m}(\cdot,m',\cdot)
	\right\|_{L^2(\mathbb{T}^d\times\mathbb{T}^d)}
	\leq
	M\|m-m'\|_{L^2(\mathbb{T}^d)}
	\end{equation}
	for all admissible probability densities $m, m'\in\mathcal P_M(\mathbb T^d)$.
	Assume also that the functional derivative of $U_2$ is Lipschitz
	continuous with respect to the measure variable in the sense that
	\begin{equation}\label{eq:Um-measure-Lipschitz-final}
	\left\|
	\frac{\delta U_2}{\delta m}(t,\cdot,m,\cdot)
	-
	\frac{\delta U_2}{\delta m}(t,\cdot,m',\cdot)
	\right\|_{H^1(\mathbb{T}^d\times\mathbb{T}^d)}
	\leq
	M\|m-m'\|_{L^2(\mathbb{T}^d)},
	\end{equation}
	for all $t\in[0,T]$ and all admissible probability densities
	$m, m'\in\mathcal P_M(\mathbb T^d)$.

	Then there exists a constant
	$C=C(T,a,\nu,d,M)>0$
	such that
	\begin{align}
	&
	\left\|
	\overline U(\cdot,\cdot,m_1(\cdot))
	\right\|_{L^2(0,T;H^1(\mathbb{T}^d))}^2
	+
	\left\|
	\frac{\delta\overline U}{\delta m}
	(\cdot,\cdot,m_1(\cdot),\cdot)
	\right\|_{L^2(0,T;H^1(\mathbb{T}^d\times\mathbb{T}^d))}^2
	\nonumber\\
	&\leq
	C
	\left[
	\left\|
	\overline U(T,\cdot,m_1(T))
	\right\|_{H^1(\mathbb{T}^d)}^2
	+
	\left\|
	\frac{\delta\overline U}{\delta m}
	(T,\cdot,m_1(T),\cdot)
	\right\|_{H^1(\mathbb{T}^d\times\mathbb{T}^d)}^2
	\right].
	\label{eq:final-master-stability}
	\end{align}
\end{thm}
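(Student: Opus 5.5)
The plan is to fix the Carleman parameter once and for all, $\lambda:=\max\{\lambda_0,\lambda_1\}$ from Theorems~\ref{thm:full-master-Carleman} and~\ref{thm:full-K-Carleman}. The weights $e^{2\lambda(t+a)^\nu}$ then lie between $e^{2\lambda a^\nu}$ and $e^{2\lambda(T+a)^\nu}$, so every weighted inequality turns into an unweighted one with constants depending only on $(T,a,\nu,d,M)$. The point to keep in view is that the quantity we must estimate, $\frac{\delta\overline U}{\delta m}(t,x,m_1(t),y)$, is \emph{not} $\overline K$, because $K_2$ is evaluated along $m_2$. So I write
\[
\frac{\delta\overline U}{\delta m}(t,x,m_1(t),y)=\overline K(t,x,y)+\Bigl[\frac{\delta U_2}{\delta m}(t,x,m_2(t),y)-\frac{\delta U_2}{\delta m}(t,x,m_1(t),y)\Bigr].
\]
By \eqref{eq:Um-measure-Lipschitz-final}, the bracket has $H^1(\mathbb T^d\times\mathbb T^d)$-norm at most $M\|\overline m(t)\|_{L^2}$. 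The same splitting at $t=T$ gives
\[
\|\overline K(T)\|_{H^1}\le \Bigl\|\frac{\delta\overline U}{\delta m}(T,\cdot,m_1(T),\cdot)\Bigr\|_{H^1}+M\|\overline m(T)\|_{L^2},
\]
which converts the terminal term in \eqref{eq:barK-Carleman-stability} into terminal data along $m_1$ plus a flow-separation term. The master-field part of \eqref{eq:final-master-stability} follows directly from \eqref{eq:master-difference-Carleman} with $\lambda$ fixed. The separation term is handled by Proposition~\ref{prop:characteristic-separation}, namely \eqref{eq:characteristic-separation-terminal} with $\lambda$ fixed, which gives $\sup_t\|\overline m(t)\|_{L^2}^2\le C\|\overline U(T,\cdot,m_1(T))\|_{H^1}^2$.

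The main step is to bound the source $R(\overline U,\overline m)$ from \eqref{eq:R-Ubar-mbar} pointwise in time in $L^2(\mathbb T^d\times\mathbb T^d)$ by $\|D_x\overline U(t,\cdot,m_1(t))\|_{L^2}+\|\overline m(t)\|_{L^2}$. I take the four pieces in turn.

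\begin{itemize}
\item The $\delta F/\delta m$ difference is bounded by $M\|\overline m(t)\|_{L^2}$, by \eqref{eq:Fm-measure-Lipschitz-final}.
\item For the terms $\overline b\cdot D_xK_2$ and $\overline b\cdot D_yK_2$, I use $\|D_xK_2\|_\infty,\|D_yK_2\|_\infty\le M$ together with \eqref{eq:b-difference-L2}. This bound on $\overline b$ was obtained in the proof of Proposition~\ref{prop:characteristic-separation}.
\item For the $\overline m$-integral, Cauchy--Schwarz in $z$ and the $L^\infty$ bounds on $D_yK_2$, $A_1$ and $D_xK_2$ give a bound of order $\|\overline m(t)\|_{L^2}$.
\item For the $\overline A$-term, \eqref{eq:Hessian-H-Lipschitz-final} gives $|\overline A|\le M|D_xu_1-D_xu_2|$. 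I then split $D_xu_1-D_xu_2$ exactly as in \eqref{eq:b-difference-decomposition}, using \eqref{eq:U2-measure-difference}--\eqref{estimate_DU}, to get $\|\overline A(t)\|_{L^2}\le C(\|D_x\overline U(t,\cdot,m_1(t))\|_{L^2}+\|\overline m(t)\|_{L^2})$. Cauchy--Schwarz against $m_2\le M$ then finishes this piece.
\end{itemize}

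Since $\lambda$ is fixed, this yields
\[
\int_Q e^{2\lambda(t+a)^\nu}|R|^2\le C\int_0^T\bigl(\|D_x\overline U(t,\cdot,m_1(t))\|_{L^2}^2+\|\overline m(t)\|_{L^2}^2\bigr)dt\le C\|\overline U(T,\cdot,m_1(T))\|_{H^1}^2,
\]
where the last inequality uses \eqref{eq:master-difference-Carleman} and \eqref{eq:characteristic-separation-terminal}.

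To assemble, I insert the source bound and the terminal splitting into \eqref{eq:barK-Carleman-stability}. Dividing by $e^{2\lambda a^\nu}$ bounds $\|\overline K\|_{L^2(0,T;H^1)}^2$ by the right-hand side of \eqref{eq:final-master-stability}. Adding the bracket term, controlled by $CT\sup_t\|\overline m(t)\|_{L^2}^2$, and the master-field estimate completes the proof. No absorption argument is needed, because the sources in the $\overline K$-equation depend only on $\overline U$ and $\overline m$, and these are controlled independently beforehand.

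The step I expect to require the most care is the $\overline A$ and $\overline b$ bookkeeping, since $D_xu_2$ is evaluated along $m_2$ rather than $m_1$. The key is that, once the correct decomposition through $D_xU_2(t,x,m_1(t))$ is written down, each cross-flow discrepancy is controlled by $\|\overline m\|_{L^2}$ through the uniform bound on $D_x\frac{\delta U_2}{\delta m}$. It is also essential that the admissibility condition $m_1(t),m_2(t)\in\mathcal P_M$ holds, which follows from $\|m_i\|_{L^\infty}\le M$; this is what licenses applying \eqref{eq:Fm-measure-Lipschitz-final} and \eqref{eq:Um-measure-Lipschitz-final}.
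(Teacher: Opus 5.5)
Your proposal is correct and follows essentially the same route as the paper's proof. Both fix $\lambda_*=\max\{\lambda_0,\lambda_1\}$, derive the unweighted master-field and flow-separation bounds, bound the four pieces of $R(\overline U,\overline m)$ by splitting through $D_xU_2(t,x,m_1(t))$, apply the $\overline K$ Carleman estimate, and use the Lipschitz hypothesis on $\frac{\delta U_2}{\delta m}$ both to turn $\overline K(T)$ into terminal data along $m_1$ and to pass from $\overline K$ back to $\frac{\delta\overline U}{\delta m}(\cdot,\cdot,m_1(\cdot),\cdot)$; the only cosmetic difference is that you cite the terminal form of the separation estimate directly, which is equivalent once $\lambda$ is fixed.
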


\begin{proof}
	Let $\lambda_0$ and $\lambda_1$ be the Carleman thresholds given by
	Theorem~\ref{thm:full-master-Carleman} and
	Theorem~\ref{thm:full-K-Carleman}, respectively, and set
	\begin{equation*}\label{eq:lambda-star}
	\lambda_*:=\max\{\lambda_0,\lambda_1\}.
	\end{equation*}
	
	Applying Theorem~\ref{thm:full-master-Carleman} to
	$V=\overline U$ and using
	\[
	\mathcal P\overline U=0,
	\]
	we obtain
	\begin{equation*}
	\begin{aligned}
	&
	\lambda_*
	\int_0^T
	e^{2\lambda_*(t+a)^\nu}
	\left\|
	D_x\overline U(t,\cdot,m_1(t))
	\right\|_{L^2(\mathbb{T}^d)}^2\,dt
	\nonumber\\
	&\quad+
	\lambda_*^2
	\int_0^T
	e^{2\lambda_*(t+a)^\nu}
	\left\|
	\overline U(t,\cdot,m_1(t))
	\right\|_{L^2(\mathbb{T}^d)}^2\,dt
	\nonumber\\
	&\leq
	Ce^{2\lambda_*(T+a)^\nu}
	\left\|
	\overline U(T,\cdot,m_1(T))
	\right\|_{H^1(\mathbb{T}^d)}^2.
	\label{eq:U-final-step}
	\end{aligned}
	\end{equation*}
	Since $\lambda_*$ is fixed and
	\[
	e^{2\lambda_*(t+a)^\nu}
	\geq
	e^{2\lambda_*a^\nu},
	\qquad t\in[0,T],
	\]
	it follows that
	\begin{equation}\label{eq:U-unweighted-final}
	\left\|
	\overline U(\cdot,\cdot,m_1(\cdot))
	\right\|_{L^2(0,T;H^1(\mathbb{T}^d))}^2
	\leq
	C
	\left\|
	\overline U(T,\cdot,m_1(T))
	\right\|_{H^1(\mathbb{T}^d)}^2.
	\end{equation}
	
	By Proposition~\ref{prop:characteristic-separation},
	\begin{equation*}
	\begin{aligned}
	&
	\sup_{t\in[0,T]}
	\|\overline m(t,\cdot)\|_{L^2(\mathbb{T}^d)}^2
	+
	\int_0^T
	\|D_x\overline m(t,\cdot)\|_{L^2(\mathbb{T}^d)}^2\,dt
	\nonumber\\
	&\leq
	C
	\int_0^T
	\left\|
	D_x\overline U(t,\cdot,m_1(t))
	\right\|_{L^2(\mathbb{T}^d)}^2\,dt.
	\label{eq:mbar-control-final}
	\end{aligned}
	\end{equation*}
	Hence, by \eqref{eq:U-unweighted-final},
	\begin{equation}\label{eq:mbar-final}
	\begin{aligned}
	&
	\sup_{t\in[0,T]}
	\|\overline m(t,\cdot)\|_{L^2(\mathbb{T}^d)}^2
	+
	\int_0^T
	\|D_x\overline m(t,\cdot)\|_{L^2(\mathbb{T}^d)}^2\,dt
	\\
	&\leq
	C
	\left\|
	\overline U(T,\cdot,m_1(T))
	\right\|_{H^1(\mathbb{T}^d)}^2.
	\end{aligned}
	\end{equation}
	
	Recall from Proposition~\ref{prop:K-difference-two-flows} that
    \begin{equation*}
	\begin{aligned}
	R(\overline U,\overline m)
	={}&
	\frac{\delta F}{\delta m}(x,m_1(t),y)
	-
	\frac{\delta F}{\delta m}(x,m_2(t),y)
	\\
	&-
	\overline b(t,x)\cdot D_xK_2(t,x,y)
	-
	\overline b(t,y)\cdot D_yK_2(t,x,y)
	\\
	&-
	\int_{\mathbb{T}^d}
	\overline m(t,z)
	D_yK_2(t,x,z)
	\cdot
	A_1(t,z)D_xK_2(t,z,y)\,dz
	\\
	&-
	\int_{\mathbb{T}^d}
	m_2(t,z)
	D_yK_2(t,x,z)
	\cdot
	\overline A(t,z)D_xK_2(t,z,y)\,dz.
	\label{eq:R-final-recall}
	\end{aligned}
	\end{equation*}
	
	By Proposition~\ref{prop:characteristic-separation},
	\begin{equation}\label{eq:bbar-final}
	\|\overline b(t,\cdot)\|_{L^2(\mathbb{T}^d)}^2
	\leq
	C
	\left(
	\left\|
	D_x\overline U(t,\cdot,m_1(t))
	\right\|_{L^2(\mathbb{T}^d)}^2
	+
	\|\overline m(t,\cdot)\|_{L^2(\mathbb{T}^d)}^2
	\right).
	\end{equation}
	
	For $\overline A=A_1-A_2$, we have that
    \begin{equation*}
	\begin{aligned}
	\overline A(t,x)
	={}&
	D_{pp}^2H
	\left(
	x,D_xU_1(t,x,m_1(t))
	\right)
	-
	D_{pp}^2H
	\left(
	x,D_xU_2(t,x,m_1(t))
	\right)
	\\
	&+
	D_{pp}^2H
	\left(
	x,D_xU_2(t,x,m_1(t))
	\right)
	-
	D_{pp}^2H
	\left(
	x,D_xU_2(t,x,m_2(t))
	\right).
	\label{eq:Abar-decomposition}
	\end{aligned}
	\end{equation*}
	Using \eqref{eq:Hessian-H-Lipschitz-final} together with the estimate \eqref{estimate_DU}
	obtained in Proposition~\ref{prop:characteristic-separation}, we get
	\begin{equation}\label{eq:Abar-final}
	\|\overline A(t,\cdot)\|_{L^2(\mathbb{T}^d)}^2
	\leq
	C
	\left(
	\left\|
	D_x\overline U(t,\cdot,m_1(t))
	\right\|_{L^2(\mathbb{T}^d)}^2
	+
	\|\overline m(t,\cdot)\|_{L^2(\mathbb{T}^d)}^2
	\right).
	\end{equation}
	
	By \eqref{eq:Fm-measure-Lipschitz-final},
	\begin{equation*}\label{eq:Fm-final-bound}
	\left\|
	\frac{\delta F}{\delta m}(\cdot,m_1(t),\cdot)
	-
	\frac{\delta F}{\delta m}(\cdot,m_2(t),\cdot)
	\right\|_{L^2(\mathbb{T}^d\times\mathbb{T}^d)}
	\leq
	C
	\|\overline m(t,\cdot)\|_{L^2(\mathbb{T}^d)}.
	\end{equation*}
	
	Using the bounds inherited from
	Theorem~\ref{thm:full-K-Carleman} and
	Proposition~\ref{prop:characteristic-separation}, together with
	\eqref{eq:Abar-final}, the Cauchy--Schwarz inequality gives
	\begin{equation*}
	\begin{aligned}
	&
	\left\|
	\int_{\mathbb{T}^d}
	\overline m(t,z)
	D_yK_2(t,\cdot,z)
	\cdot
	A_1(t,z)D_xK_2(t,z,\cdot)\,dz
	\right\|_{L^2(\mathbb{T}^d\times\mathbb{T}^d)}
	\nonumber\\
	&\qquad\leq
	C
	\|\overline m(t,\cdot)\|_{L^2(\mathbb{T}^d)},
	\label{eq:R-m-integral-final}
	\end{aligned}
	\end{equation*}
	and
	\begin{align}
	&
	\left\|
	\int_{\mathbb{T}^d}
	m_2(t,z)
	D_yK_2(t,\cdot,z)
	\cdot
	\overline A(t,z)D_xK_2(t,z,\cdot)\,dz
	\right\|_{L^2(\mathbb{T}^d\times\mathbb{T}^d)}
	\nonumber\\
	&\qquad\leq
	C
	\|\overline A(t,\cdot)\|_{L^2(\mathbb{T}^d)}.
	\label{eq:R-A-integral-final}
	\end{align}
	
	Combining
	\eqref{eq:bbar-final}--\eqref{eq:R-A-integral-final}, we obtain
	\begin{align*}
	\|R(\overline U,\overline m)(t)\|_{L^2(\mathbb{T}^d\times\mathbb{T}^d)}^2
	\leq
	C
	\left(
	\left\|
	D_x\overline U(t,\cdot,m_1(t))
	\right\|_{L^2(\mathbb{T}^d)}^2
	+
	\|\overline m(t,\cdot)\|_{L^2(\mathbb{T}^d)}^2
	\right).
	\label{eq:R-final-estimate}
	\end{align*}
	Consequently, by \eqref{eq:U-unweighted-final} and
	\eqref{eq:mbar-final},
	\begin{equation}\label{eq:R-final-terminal-control}
	\int_0^T
	\|R(\overline U,\overline m)(t)\|_{L^2(\mathbb{T}^d\times\mathbb{T}^d)}^2
	\,dt
	\leq
	C
	\left\|
	\overline U(T,\cdot,m_1(T))
	\right\|_{H^1(\mathbb{T}^d)}^2.
	\end{equation}
	
	Applying Theorem~\ref{thm:full-K-Carleman} to
	$V=\overline K$ with $\lambda=\lambda_*$, and using
	\[
	\mathcal L_K\overline K
	=
	R(\overline U,\overline m),
	\]
	together with \eqref{eq:R-final-terminal-control}, we obtain
	\begin{align}
	\|\overline K\|_{L^2(0,T;H^1(\mathbb{T}^d\times\mathbb{T}^d))}^2
	\leq
	C
	\Big[
	&
	\left\|
	\overline U(T,\cdot,m_1(T))
	\right\|_{H^1(\mathbb{T}^d)}^2
	\nonumber\\
	&+
	\|\overline K(T,\cdot,\cdot)\|_{H^1(\mathbb{T}^d\times\mathbb{T}^d)}^2
	\Big].
	\label{eq:barK-unweighted}
	\end{align}
	
	We next estimate the terminal value of $\overline K$. By definition,
	\begin{equation*}
	\begin{aligned}
	\overline K(T,x,y)
	={}&
	\frac{\delta U_1}{\delta m}
	(T,x,m_1(T),y)
	-
	\frac{\delta U_2}{\delta m}
	(T,x,m_2(T),y)
	\nonumber\\
	={}&
	\frac{\delta\overline U}{\delta m}
	(T,x,m_1(T),y)
	\nonumber\\
	&+
	\frac{\delta U_2}{\delta m}
	(T,x,m_1(T),y)
	-
	\frac{\delta U_2}{\delta m}
	(T,x,m_2(T),y).
	\label{eq:barK-terminal-decomposition-final}
	\end{aligned}
	\end{equation*}
	Using \eqref{eq:Um-measure-Lipschitz-final} and
	\eqref{eq:mbar-final}, we obtain
	\begin{align}
	\|\overline K(T,\cdot,\cdot)\|_{H^1(\mathbb{T}^d\times\mathbb{T}^d)}^2
	\leq
	C
	\Big[
	&
	\left\|
	\frac{\delta\overline U}{\delta m}
	(T,\cdot,m_1(T),\cdot)
	\right\|_{H^1(\mathbb{T}^d\times\mathbb{T}^d)}^2
	\nonumber\\
	&+
	\left\|
	\overline U(T,\cdot,m_1(T))
	\right\|_{H^1(\mathbb{T}^d)}^2
	\Big].
	\label{eq:barK-terminal-final}
	\end{align}
	
	Combining \eqref{eq:barK-unweighted} and
	\eqref{eq:barK-terminal-final}, we obtain
	\begin{equation}
	\begin{aligned}
	\|\overline K\|_{L^2(0,T;H^1(\mathbb{T}^d\times\mathbb{T}^d))}^2
	\leq
	C
	\Big[
	&
	\left\|
	\overline U(T,\cdot,m_1(T))
	\right\|_{H^1(\mathbb{T}^d)}^2
	\\
	&+
	\left\|
	\frac{\delta\overline U}{\delta m}
	(T,\cdot,m_1(T),\cdot)
	\right\|_{H^1(\mathbb{T}^d\times\mathbb{T}^d)}^2
	\Big].
	\label{eq:barK-final-control}
	\end{aligned}
	\end{equation}

	Besides,
	\begin{equation}
	\begin{aligned}
	\frac{\delta\overline U}{\delta m}
	(t,x,m_1(t),y)
	={}&
	\overline K(t,x,y)
	\nonumber\\
	&+
	\frac{\delta U_2}{\delta m}
	(t,x,m_2(t),y)
	-
	\frac{\delta U_2}{\delta m}
	(t,x,m_1(t),y).
	\label{eq:K-transfer-final}
	\end{aligned}
	\end{equation}
	Hence, by \eqref{eq:Um-measure-Lipschitz-final},
	\begin{equation*}
	\begin{aligned}
	&
	\left\|
	\frac{\delta\overline U}{\delta m}
	(t,\cdot,m_1(t),\cdot)
	\right\|_{H^1(\mathbb{T}^d\times\mathbb{T}^d)}^2
	\nonumber\\
	&\qquad\leq
	C
	\left(
	\|\overline K(t,\cdot,\cdot)\|_{H^1(\mathbb{T}^d\times\mathbb{T}^d)}^2
	+
	\|\overline m(t,\cdot)\|_{L^2(\mathbb{T}^d)}^2
	\right).
	\label{eq:K-transfer-estimate-final}
	\end{aligned}
	\end{equation*}
	Integrating over $(0,T)$ and using
	\eqref{eq:mbar-final} and \eqref{eq:barK-final-control}, we obtain
	\begin{align}
	&
	\left\|
	\frac{\delta\overline U}{\delta m}
	(\cdot,\cdot,m_1(\cdot),\cdot)
	\right\|_{L^2(0,T;H^1(\mathbb{T}^d\times\mathbb{T}^d))}^2
	\nonumber\\
	&\leq
	C
	\left[
	\left\|
	\overline U(T,\cdot,m_1(T))
	\right\|_{H^1(\mathbb{T}^d)}^2
	+
	\left\|
	\frac{\delta\overline U}{\delta m}
	(T,\cdot,m_1(T),\cdot)
	\right\|_{H^1(\mathbb{T}^d\times\mathbb{T}^d)}^2
	\right].
	\label{eq:Km-final-same-flow}
	\end{align}
	
	Combining \eqref{eq:U-unweighted-final} and
	\eqref{eq:Km-final-same-flow} yields
	\eqref{eq:final-master-stability}.
	This completes the proof.
\end{proof}

\begin{rem}
	Although the estimate in Theorem \ref{thm:full-master-Carleman} and Theorem~\ref{thm:master-stability} are written
	along the reference flow $m_1$, this flow is not fixed a priori.
	Indeed, the initial distribution $m_0$ can be chosen arbitrarily within
	the admissible class, and the same argument applies to the corresponding
	characteristic flow generated by $U_1$, provided that the a priori
	bounds appearing in the assumptions are satisfied uniformly.
	Therefore, the stability estimate should be understood as a
	flow-wise estimate valid for an arbitrary admissible characteristic
	measure flow.
\end{rem}

\begin{rem}
	The stability results above also yield corresponding uniqueness consequences.
	If $G_1=G_2$, the master-field stability estimate in Theorem \ref{thm:full-master-Carleman} implies that
	$U_1=U_2$ along every admissible characteristic measure flow. Under the
	additional assumptions of Theorem \ref{thm:master-stability}, the same terminal data further imply
	the coincidence of their functional measure derivatives along these flows.
	Moreover, since $\delta U/\delta m$ represents the first-order regularity of
	the master field with respect to the measure variable, Theorem \ref{thm:master-stability} may also be
	viewed as a stability result at a higher regularity level, controlling not only
	the master field itself but also its first functional derivative with respect
	to the measure variable.
\end{rem}

\end{document}